\newtheorem{thm}{Theorem}
\newtheorem{lem}[thm]{Lemma}
\newtheorem{prop}[thm]{Proposition}
\theoremstyle{definition}
\newtheorem{defin}[thm]{Definition}
\newtheorem{example}[thm]{Example}
\theoremstyle{remark}
\numberwithin{equation}{section}
\begin{document}
\title[Fast Basins]{Basic Topological Structure of Fast Basins}
\author[M. F. Barnsley]{Michael F. Barnsley}
\address{The Australian National University\\
Canberra, Australia}
\author[K. Le\'{s}niak]{Krzysztof Le\'{s}niak}
\address{Faculty of Mathematics and Computer Science, Nicolaus Copernicus University\\
Toru\'{n}, Poland}

\begin{abstract}
We define fractal continuations and the fast basin of the IFS and investigate
which properties they inherit from the attractor. Some illustrated examples
are provided.

\end{abstract}
\subjclass[2010]{Primary }
\keywords{fast basin, strict attractor, connected set, topological dimension, fractal dimension}
\date{\today}
\maketitle

\section{\label{introsec}Introduction}

Fractal continuations, fast basins, and fractal manifolds were introduced in
\cite{FManifolds}; fractal continuation of analytic and other functions was
introduced in \cite{BarnsleyVince}. In this paper we establish some
topological and geometrical properties of continuations and fast basins of
attractors of invertible iterated function systems (IFSs) on complete metric
spaces. Fast basin, attractor, IFS, and other objects, are defined in Section
\ref{defsec}.

Not only are fast basins beautiful objects, illustrated for example in Figure
\ref{fastbasin023}, but also they generalize analytic continuations. For
example, under natural conditions, the fast basin of an analytic fractal is
the same as the analytic continuation of the analytic fractal, when the latter
contains an open subset of an analytic manifold. Fast basins extend the notion
of analytic continuation from the realm of infinitely differentiable objects
to the realm of certain rough, non-differentiable objects.

A contractive IFS comprises a set of contractive transformations and possesses
a unique attractor. An example of an attractor of an IFS is the Sierpi\'{n}ski
triangle $A$ with vertices at $a_{i}\in\mathbb{C}$, the complex plane, where
the IFS comprises three similitudes $z\mapsto f_{i}(z)=\left(  z+a_{i}\right)
/2$ $(i=1,2,3)$. In this case, the attractor is the unique nontrivial compact
set $A\subset\mathbb{C}$ such that $A=f_{1}(A)\cup f_{2}(A)\cup f_{3}(A)$, and
the fast basin is a lattice of copies of $A$, as illustrated in Figure
\ref{manifold1}.
\begin{figure}[ptb]%
\centering
\includegraphics[
natheight=12.386800in,
natwidth=18.560301in,
height=2.1826in,
width=3.2569in
]%
{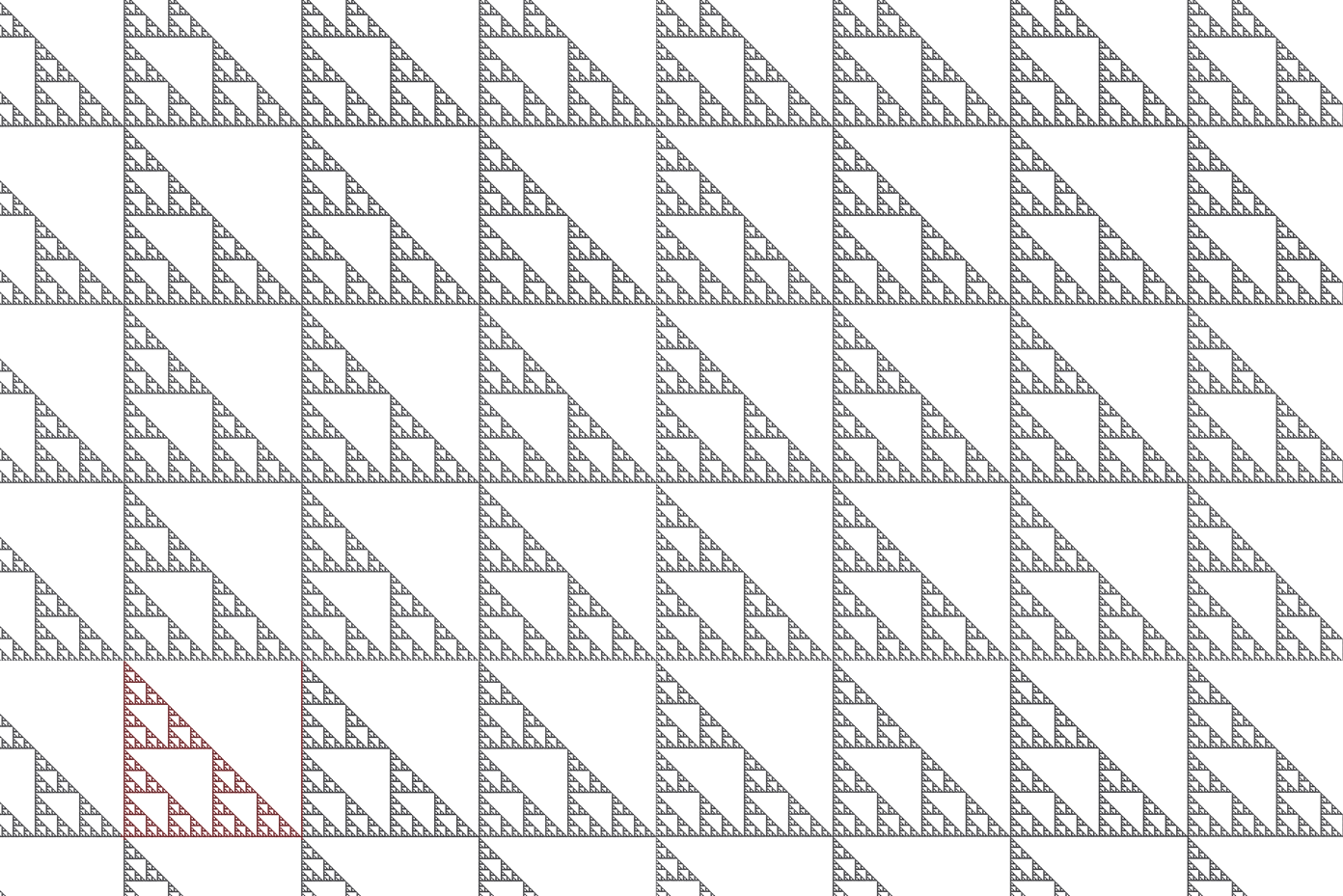}%
\caption{An example of a fast basin of an attractor of an IFS. The attractor
of the IFS, a right-angle Sierpinski triangle, is shown in red.}%
\label{manifold1}%
\end{figure}
Here the fast basin is
\[
\left\{  z\in\mathbb{C}:\exists k\in\mathbb{N},(i_{1},i_{2},...,i_{k}%
)\in\{1,2,3\}^{k},f_{i_{1}}\circ f_{i_{2}}\circ...\circ f_{i_{k}}(z)\in
A\right\}  .
\]
In this and other cases, the fast basin is uniquely defined by the attractor
$A$, provided that functions of the IFS, the $f_{i}$s, are appropriately
analytic. The fast basin is related to the attractor $A$ analogously to the
way that analytic continuation of a function is related to its (multivariable)
Taylor series expansion about a point. The attractor plays the role of the
power series, and the fast basin plays the role of the analytic continuation.
This analogy can be made precise for analytic fractal interpolation functions,
as explained in \cite{BarnsleyVince}.

An example, illustrating the relationship between fast basin and analytic
continuation, is provided by the IFS
\[
\mathcal{F}_{1}\mathcal{=}\{\mathbb{C\times C};f_{+1+i},f_{-1+i}%
,f_{+1-i},f_{-1-i}\}
\]
where
\begin{align*}
f_{\pm1-i}%
\begin{pmatrix}
z\\
w
\end{pmatrix}
&  =%
\begin{pmatrix}
\frac{1}{2} & 0\\
\frac{\pm1-i}{2} & \frac{1}{4}%
\end{pmatrix}%
\begin{pmatrix}
z\\
w
\end{pmatrix}
+%
\begin{pmatrix}
\frac{\pm-i}{2}\\
\frac{\mp i}{2}%
\end{pmatrix}
;\\
f_{\pm1+i}%
\begin{pmatrix}
z\\
w
\end{pmatrix}
&  =%
\begin{pmatrix}
\frac{1}{2} & 0\\
\frac{\pm1+i}{2} & \frac{1}{4}%
\end{pmatrix}%
\begin{pmatrix}
z\\
w
\end{pmatrix}
+%
\begin{pmatrix}
\frac{\pm1+i}{2}\\
\frac{\pm i}{2}%
\end{pmatrix}
.
\end{align*}
The unique attractor $A_{1}$ of $\mathcal{F}_{1}$ is the graph of $z\mapsto
z^{2}$ over the square $-1\leq\operatorname{Re}z,\operatorname{Im}z\leq+1$.
The fast basin of $A_{1}$ is the manifold
\[
\left\{  (z,z^{2})\in\mathbb{C\times C}:z\in\mathbb{C}\right\}  .
\]
In other words, the fast basin of $A_{1}$ (w.r.t. the IFS $\mathcal{F}_{1}$)
is the Riemann surface for $z\mapsto z^{2}$ over $\mathbb{C}$. This manifold
can be characterized as the set of points $(z_{0},w_{0})\in$ $\mathbb{C\times
C}$ such that there exists $f:\mathbb{C\times C\rightarrow C\times C}$ of the
form $f(z,w)=(az+b,F(z,w))$, where $F(z,w)$ is holomorphic and invertible,
$F(\mathbb{C\times C)=C}$, $a,b\in\mathbb{C}$, with the properties (i)
$f(A_{1})\subset A_{1}$ and $\ $(ii) $f(z_{0},w_{0})\in A_{1}$. The fast
basin, in this case, is independent of the analytic IFS that is used to
generate it, modulo some natural conditions.

Fast basins are distinct from the "macrofractals" discussed in \cite{Banakh}
which include (the union of two isometric copies of) the "macro-Cantor" set%
\[
\left\{  \sum_{k=0}^{\infty}2x_{k}3^{k}:(x_{k})\in\{0,1\}^{\mathbb{N}%
}\right\}  \text{.}%
\]
The latter is an asymptotic counterpart of the Cantor set, see
\cite{BanakhZarichnyi}, and also \cite{Dranishnikov}. For a contractive IFS on
a complete metric space, the "macrofractal" is the closure of the set of fixed
points of the inverse or dual IFS.

In Section \ref{defsec} we define and discuss IFSs, their (strict) attractors,
basins, continuations, and fast basins. We also mention other motivations for
this study.

In Section \ref{theoremsec} we establish how connectivity, porosity,
dimension, and possession of an empty interior, of the attractor are shared
with the fractal continuations and the fast basin of the attractor. The main
conclusions are summarized in Table~\ref{tableofinvariance}.

In Section \ref{Examples} we illustrate examples of fast basins.

In Section \ref{sec:dynamics} we consider the dynamics induced by the IFS on
the fast basin of an attractor.

In Section \ref{slowsec} define the slow basin of an attractor of an IFS, and
establish some of its basic topological properties. Roughly speaking, the slow
basin of an attractor $A$ comprises those points whose $\omega$-limit set
under the Hutchinson operator of the IFS has non empty intersection with the
attractor. It includes both the basin and the fast basin of an attractor.

\section{\label{defsec}Definitions}

For the purposes of this paper we use the following definition of an IFS.

\begin{defin}
An \textit{iterated function system (IFS)} is a topological space $X$ together
with a finite set of homeomorphisms $w_{i}:X\rightarrow X$, $i=1,2,\dots,N$.
\end{defin}

We use the notation
\begin{equation}
\mathcal{W}=\{X;w_{1},w_{2},...,w_{N}\}
\end{equation}
to denote an IFS. Other more general definitions of an IFS are used in the
literature; for example, the collection of functions in the IFS may be
infinite, see for example \cite{Wicks,K,Arbieto}, or the functions may
themselves be set-valued \cite{vrscay, lasota}. However, throughout this
paper, $N$ is a finite positive integer and, except where otherwise stated,
$X$ is a complete metric space.

The \textit{Hutchinson operator} $W:\mathcal{K}(X)\rightarrow\mathcal{K}(X)$
is defined on the family of nonempty compact sets $S\in\mathcal{K}(X)$ by
$W(S):=\bigcup_{i=1}^{N}w_{i}(S)$. The $k$-fold composition of $W$ is denoted
by $W^{k}$ with the convention that $W^{0}$ means the identity map. By the
inverse image of $S\subset X$ under $W$ we understand the set
\[
W^{-1}(S)=\{x\in X:W(x)\cap S\neq\emptyset\}.
\]
This is the large counter-image employed in set-valued analysis, cf.
\cite{AliprantisBorder}. Obviously
\[
W^{-1}(S)=\bigcup_{n=1}^{N}w_{n}^{-1}(S),
\]
where $w_{n}^{-1}(S)$ is the image of $S$ under the inverse map $w_{n}^{-1}$
or, equivalently, the counter-image of $S$ under $w_{n}$.

Throughout we assume that the IFS $\mathcal{W}$ possesses a strict attractor,
$A$. Following \cite{BarnsleyVince-Projective} we recall that a closed set
$A\subset X$ is a \textit{strict attractor} of $\mathcal{W}$, when there
exists an open set $B\supset A$ such that $W^{k}(S)\rightarrow A$ for every
nonempty compact $S\subset B$, where the set-convergence is meant in the sense
of Hausdorff. The maximal open set $B(A)$ with the above property is called
the \textit{basin} of the attractor $A$ (with respect to $\mathcal{W}$). From
the definition it follows that $A$ is compact, nonempty and invariant; that
is
\[
A\in\mathcal{K}(X)\text{, and }W(A)=A\text{;}%
\]
see for example \cite{BarnsleyLesniak-Continuity,Arbieto}.

We tend to omit the adjective 'strict' and refer to $A$ briefly as an
attractor. However, the reader should be aware that there are other
definitions of the notion of an attractor, see for example \cite{RPGT}. It is
widely known that strict attractors occur in contractive systems; see
\cite{FractalsEver,Falconer,Edgar} for discussion of contractive systems as
described in Hutchinson's original paper \cite{Hutchinson}, and see
\cite{BarnsleyDemko,Hata,Wicks,Mate,AndresFiser} for discussion of more
general contractive systems. But an IFS which is not contractive can possess a
strict attractor, see \cite{Kameyama,BarnsleyVince-Projective,Vince} and also
the following simple example.

\begin{example}
Let $X$ be a compactum and $h:X\rightarrow X$ be a homeomorphism such that $X$
is a minimal invariant set, i.e., if $S\in\mathcal{K}(X)$ and $h(S)=S$, then
$S=X$. By a virtue of the Birkhoff's minimal invariant set theorem (see
\cite{Gottschalk} and references therein) we know that the forward orbit of
any point in $X$ under $h$ is dense in $X$,
\[
\forall_{x_{0}\in X}\;\overline{\{h^{k}(x_{0}):k\geq0\}}=X.
\]
A canonical situation of this kind arises for an irrational rotation of the
circle. The IFS $\mathcal{W=}\{X;e,h\}$, where $e$ is the identity map on $X$,
has strict attractor $A=X$ with $B(A)=X$. But this IFS is noncontractive and
cannot be remetrized into a system of (weak) contractions. The identity map
makes remetrization to a contractive system impossible. Moreover, this is an
example of an IFS where the attractor is not point-fibred in the sense of
Kieninger (cf. \cite{K}) and is not topologically self-similar in the sense of
Kameyama (cf. \cite{Kameyama}). To see that $X$ is the unique strict attractor
of $\mathcal{W}$ we note the following. First, for all $x_{0}\in X$,
\[
W^{k}(\{x_{0}\})=\{h^{m}(x_{0}):0\leq m\leq k\}\rightarrow\overline
{\{h^{m}(x_{0}):m\geq0\}}=X.
\]
Second, for a general $S\in\mathcal{K}(X)$ we have $W^{k}(S)\rightarrow X$,
because $W^{k}(\{x_{0}\})\subset W^{k}(S)\subset X$ for arbitrary $x_{0}\in S$.
\end{example}

For a finite word $(\theta_{1},\theta_{2},\ldots,\theta_{k})\in\{1,{\ldots
},N\}^{k}$ we define
\[
w_{\theta_{1}..\theta_{k}}:=w_{\theta_{1}}\circ\ldots\circ w_{\theta_{k}%
}\text{ and }w_{\varnothing}=e\text{,}%
\]
where $\varnothing$ is the empty (zero-length) word and $e$ is the identity
map. Also given an infinite word $\vartheta=(\theta_{1},\ldots,\theta
_{k},\ldots)\in\{1,\ldots,N\}^{\infty}$ we write ${\vartheta}|k:=(\theta
_{1},\ldots,\theta_{k})\in\{1,\ldots,N\}^{k}$, and we define ${\vartheta
}|0:=\varnothing$. Similarly we write
\[
w_{{\vartheta}|k}^{-1}:=w_{\theta_{1}}^{-1}\circ\ldots\circ w_{\theta_{k}%
}^{-1}\text{ and }w_{{\vartheta}|k}^{-1}(S)=\{x\in X:w_{\theta_{k}}\circ
\ldots\circ w_{\theta_{1}}(x)\in S\}
\]
for all $S\subset X$.

In papers concerning the foundations of IFS theory, and also in dynamical
systems theory, basins of attractors are much studied. One reason that basins
of attractors are of interest is because they provide examples of sets which
are not only simple to describe, either in terms of an algorithm or by
specifying the IFS, but also geometrically complicated. The following two
examples illustrate this point.

\begin{example}
It follows from the work of Vince \cite{Vince} that the basin of an attractor
of a M\"{o}bius IFS may itself be the complement of an attractor of a
different M\"{o}bius IFS.
\end{example}

\begin{example}
The IFS
\[
\mathcal{W}_{\lambda}=\{\widehat{\mathbb{C}};w(z)=z^{2}-\lambda\},
\]
where $\lambda\in\mathbb{C}$ and $\widehat{\mathbb{C}}=\mathbb{C\cup
\{\infty\}}$, comprises a discrete dynamical system on the Riemann sphere. For
$\lambda\in(-0.25,0.75),$ $\mathcal{W}_{\lambda}$ possesses the attractor
$A=\{Z_{0}=0.5+\sqrt{1+4\lambda}/2\}$ with basin $B(A)$ which is a simply
connected domain bounded by a Jordan curve. In fact, the boundary of $B(A)$,
the Jordan curve, is a Julia set. Milnor has illustrated a related example,
\cite[Figure 1a on p.3-3]{Milnor}. Easy-to-use interactive software that
illustrates basins of attractors of $\mathcal{W}_{\lambda}$ is freely
available, see for example \cite{iPad}.
\end{example}

In this paper we draw attention to the set $\hat{B}$ of those initial
conditions $x_{0}\in X$ such that some orbit $x_{k}=w_{\theta_{1}..\theta_{k}%
}(x_{0})$ intersects the attractor $A$ after a finite number of steps. This
set is of interest in the following contexts: (i) analysis on fractals, in
connection with "fractafolds" and "fractal blow-ups" \cite{Strichartz}; (ii)
in connection with a generalization of the notion of analytic continuation, as
discussed in Section \ref{introsec}; (iii) in connection with a general
framework for understanding fractal tiling \cite{tiling}; (iv) in connection
with the chaos game algorithm for computing approximations to attractors; (v)
in connection with extending fractal transformations (between attractors of
pairs of IFSs) to transformations between basins of attractors, \cite{tiling}.

What is the relationship between $\hat{B}$ and $B(A)$? The examples in
Section~\ref{Examples} show that, despite our first impression, there is no
direct relation between $\hat{B}$ and $B(A)$ in general.

\begin{defin}
A \textit{fast basin} of the IFS $\mathcal{W}$ with the attractor $A$ is the
following set
\[
\hat{B} = \{x\in X: \exists_{k{\geq}0}\; W^{k}(x)\cap A\neq\emptyset\}.
\]

\end{defin}

\begin{defin}
A \textit{fractal continuation} of the attractor $A$ along the infinite word
$\vartheta=(\theta_{1},\theta_{2},\ldots)$ is the ascending union
\begin{equation}
\hat{B}(\vartheta)=\bigcup_{k{\geq}0}w_{{\vartheta}|k}^{-1}(A).
\label{eq:fractinuation}%
\end{equation}

\end{defin}

The following observations about the inverse images of the iterations of $W$
clarify and simplify further use of the definitions of the fast basin and
fractal continuations.

\begin{lem}
For $S\subset X$

\begin{enumerate}
\item[(i)] $\underbrace{W^{-1}(\ldots(W^{-1}}_{k}(S)\ldots) = {(W^{k})}%
^{-1}(S)$, simply denoted from now on by $W^{-k}(S)$;

\item[(ii)] $W^{-k}(S) = \bigcup_{(\theta_{1},\ldots,\theta_{k}) \in
\{1,\ldots,N\}^{k}}\; w_{\theta_{1}..\theta_{k}}^{-1}(S)$.
\end{enumerate}
\end{lem}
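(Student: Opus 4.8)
The plan is to prove both parts by induction on $k$, exploiting that $W$ is a union of homeomorphisms and that the ``large counter-image'' operation commutes with unions. The two statements are really one: part (ii) unpacks what part (i) gives once we know $W^{-1}$ distributes over the word decomposition of $W^{k}$.

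For part (i), I would first record the base case $k=1$, which is the definition of $W^{-1}$, and then argue the inductive step. Suppose $W^{-(k)}(S) = (W^{k})^{-1}(S)$ as sets; I want $W^{-1}(W^{-k}(S)) = (W^{k+1})^{-1}(S)$. Here $(W^{k+1})^{-1}(S)$ means $\{x : W^{k+1}(x)\cap S \neq \emptyset\}$, and by definition of $W^{-1}$ applied to the set $W^{-k}(S)$ we have $W^{-1}(W^{-k}(S)) = \{x : W(x)\cap W^{-k}(S)\neq\emptyset\}$. So the claim reduces to the set-theoretic identity
\[
W(x)\cap (W^{k})^{-1}(S) \neq \emptyset \iff W^{k}(W(x))\cap S\neq\emptyset,
\]
together with the semigroup property $W^{k}(W(x)) = W^{k+1}(x)$. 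The left-to-right direction: if $y\in W(x)$ with $W^{k}(y)\cap S\neq\emptyset$, then since $W^{k}$ is monotone, $W^{k}(y)\subset W^{k}(W(x))$, so the intersection with $S$ survives. The right-to-left direction needs the observation that $W^{k}(W(x)) = \bigcup_{y\in W(x)} W^{k}(y)$, so if this union meets $S$, some individual $W^{k}(y)$ does, and that $y$ witnesses membership in $W^{-1}((W^{k})^{-1}(S))$. The only mild subtlety is making sure $W^{k}$ applied to a finite set equals the union of $W^{k}$ on its points — this follows immediately since each $w_{i}$ is a genuine function (image of a union is union of images) and $W^{k}$ is built from the $w_{i}$ by composition and finite union.

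For part (ii), I would again induct on $k$, using (i). For $k=1$ this is exactly the displayed identity $W^{-1}(S)=\bigcup_{n=1}^{N}w_{n}^{-1}(S)$ noted in Section~\ref{defsec}. For the inductive step, write $W^{-(k+1)}(S) = W^{-1}(W^{-k}(S))$ by (i), expand the outer $W^{-1}$ as $\bigcup_{n=1}^{N} w_{n}^{-1}(W^{-k}(S))$, substitute the inductive hypothesis $W^{-k}(S)=\bigcup_{(\theta_{1},\ldots,\theta_{k})} w_{\theta_{1}..\theta_{k}}^{-1}(S)$, and then use that $w_{n}^{-1}$, being a single map, commutes with the union: $w_{n}^{-1}\bigl(\bigcup_{\tau} w_{\tau}^{-1}(S)\bigr) = \bigcup_{\tau} w_{n}^{-1}(w_{\tau}^{-1}(S)) = \bigcup_{\tau} w_{n\tau}^{-1}(S)$, where $n\tau$ denotes the concatenated word. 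Re-indexing the double union over $n\in\{1,\ldots,N\}$ and $\tau\in\{1,\ldots,N\}^{k}$ as a single union over words in $\{1,\ldots,N\}^{k+1}$ finishes the step.

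I do not expect a serious obstacle here; the result is essentially bookkeeping. The one place to be careful is the direction in part (i) that requires pulling an element $y$ out of the union $W^{k}(W(x))=\bigcup_{y\in W(x)}W^{k}(y)$ that meets $S$ — this is where one genuinely uses that $W^{k}$ of a set is the union of $W^{k}$ over its points, rather than something larger, and hence where the hypothesis that the $w_{i}$ are honest functions (not set-valued maps) is used. Everything else is monotonicity of $W$ and its iterates, distributivity of the single-valued inverse maps $w_{n}^{-1}$ over unions, and associativity of composition.
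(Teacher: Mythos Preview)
Your argument is correct. The paper states this lemma without proof, treating both parts as routine observations; your inductive verification supplies exactly the bookkeeping the authors omit, and the one nontrivial point you flag---that $W^{k}$ of a finite set decomposes as the union of $W^{k}$ over its points, so that the equivalence in part~(i) goes through in both directions---is handled properly.
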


\begin{prop}
\label{th:representation} There hold the following representations:

\begin{enumerate}
\item[(i)] $\hat{B}(\vartheta) = \{x\in X: \exists_{k\geq0}\;\; w_{\theta
|k}(x) \in A\}$;

\item[(ii)] $\hat{B} = \{x\in X: \exists_{k\geq0}\; \exists_{(\theta
_{1},\ldots,\theta_{k}) \in\{1,\ldots,N\}^{k}}\;\; w_{\theta_{1}..\theta_{k}%
}(x) \in A\}$;

\item[(iii)] $\hat{B} = \bigcup_{k=0}^{\infty} W^{-k}(A) = \bigcup
_{\vartheta\in\{1,{\ldots},N\}^{\infty}} \hat{B}(\vartheta)$.
\end{enumerate}
\end{prop}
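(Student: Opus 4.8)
The plan is to prove the three representations in sequence, using the preceding Lemma on iterated inverse images as the main tool and deriving (ii) and (iii) as consequences of (i).

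First I would establish (i). Unwinding the definition \eqref{eq:fractinuation}, we have $x\in\hat B(\vartheta)$ if and only if there exists $k\geq 0$ with $x\in w_{\vartheta|k}^{-1}(A)$. By the definition of $w_{\vartheta|k}^{-1}$ recorded just before the Lemma, namely $w_{\vartheta|k}^{-1}(S)=\{x\in X: w_{\theta_k}\circ\cdots\circ w_{\theta_1}(x)\in S\}$, this membership is equivalent to $w_{\theta_k}\circ\cdots\circ w_{\theta_1}(x)\in A$. This is \emph{not} the same as $w_{\vartheta|k}(x)=w_{\theta_1}\circ\cdots\circ w_{\theta_k}(x)\in A$; the orders of composition differ. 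So the one potential subtlety here is a bookkeeping point about the direction in which the word is read, and I would address it by relabelling: for fixed $k$, the set $\{w_{\theta_k}\circ\cdots\circ w_{\theta_1}(x)\in A\}$ ranges over exactly the same family of conditions as $\{w_{\sigma_1}\circ\cdots\circ w_{\sigma_k}(x)\in A\}$ as $\sigma$ runs over all words of length $k$; but since in (i) the word $\vartheta$ is fixed, the cleanest route is simply to note that the statement as written in (i) should be read with $w_{\theta|k}$ meaning the composition in the order that matches $w_{\vartheta|k}^{-1}$, i.e. it is a direct restatement of the definition after cancelling inverses. Either way, (i) is immediate once the notational conventions are aligned.

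Next I would prove (ii) from the definition of $\hat B$ together with part (i) of the Lemma. By definition $x\in\hat B$ iff there exists $k\geq 0$ with $W^k(x)\cap A\neq\emptyset$. Since $W^k(x)=\{w_{\theta_1..\theta_k}(x): (\theta_1,\dots,\theta_k)\in\{1,\dots,N\}^k\}$ — which follows by an easy induction from $W(S)=\bigcup_i w_i(S)$ — the condition $W^k(x)\cap A\neq\emptyset$ is equivalent to the existence of some word $(\theta_1,\dots,\theta_k)$ with $w_{\theta_1..\theta_k}(x)\in A$. Quantifying over $k$ gives exactly the right-hand side of (ii).

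Finally, for (iii): the first equality $\hat B=\bigcup_{k=0}^\infty W^{-k}(A)$ is just the definition of $\hat B$ combined with part (i) of the Lemma (which legitimises writing $W^{-k}$ for the $k$-fold inverse image) and the fact that $x\in W^{-k}(A)$ means $W^k(x)\cap A\neq\emptyset$. For the second equality, I would show mutual inclusion. If $x\in\hat B$, then by (ii) there is a finite word $(\theta_1,\dots,\theta_k)$ with $w_{\theta_1..\theta_k}(x)\in A$; extend this to any infinite word $\vartheta$ beginning with it, and then by (i) $x\in\hat B(\vartheta)\subset\bigcup_{\vartheta}\hat B(\vartheta)$. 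Conversely each $\hat B(\vartheta)$ consists of points $x$ with $w_{\vartheta|k}(x)\in A$ for some $k$, hence satisfies the condition in (ii), so $\bigcup_\vartheta\hat B(\vartheta)\subset\hat B$. No step here is a genuine obstacle; the only thing requiring care is the consistent use of the composition-order and inverse-image conventions set up in the paragraphs preceding the Lemma, and part (ii) of the Lemma handles the decomposition of $W^{-k}$ into a union over words if one prefers to argue via that route instead.
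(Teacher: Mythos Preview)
The paper does not supply a proof of this proposition at all; it is stated without proof, evidently regarded as an immediate unwinding of the definitions together with the preceding Lemma on $W^{-k}$. Your argument is correct and is exactly the kind of routine verification the authors are tacitly relying on. In particular, your handling of (ii) via $W^{k}(x)=\{w_{\theta_{1}..\theta_{k}}(x)\}$ and of (iii) by extending a finite witnessing word to an infinite one is the natural route.

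One remark: you are right to flag the composition-order issue in (i). With the paper's conventions, $w_{\vartheta|k}^{-1}:=w_{\theta_{1}}^{-1}\circ\cdots\circ w_{\theta_{k}}^{-1}$ is \emph{not} the functional inverse of $w_{\vartheta|k}:=w_{\theta_{1}}\circ\cdots\circ w_{\theta_{k}}$, so membership in $w_{\vartheta|k}^{-1}(A)$ literally says $w_{\theta_{k}}\circ\cdots\circ w_{\theta_{1}}(x)\in A$ rather than $w_{\vartheta|k}(x)\in A$. This is a genuine notational wrinkle in the paper, not a defect in your reasoning; your resolution (read the symbol in (i) consistently with the inverse convention) is the sensible one, and since the paper gives no proof there is nothing further to compare against on this point.
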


Particularly the descriptive formula
\begin{equation}
\label{eq:countabledescript}\hat{B} = \bigcup_{k\geq0}\; \bigcup_{(\theta
_{1},\theta_{2},\ldots,\theta_{k})\in\{1,\ldots,N\}^{k}}\; w_{\theta
_{1}..\theta_{k}}^{-1}(A)
\end{equation}
which follows from combining the above proposition and lemma, shall be useful.

\section{\label{theoremsec}Theorems}

This is the central section which shows that many properties of the attractor
are inherited by its fractal continuations and fast basin. We summarize
everything in the common Table~\ref{tableofinvariance} (cf. similar tables in
\cite{Engelking}).

\begin{table}[ptb]
\caption{Invariant properties}%
\label{tableofinvariance}%
\begin{tabular}
[c]{lll}%
$A$ & $\hat{B}(\vartheta)$ & $\hat{B}$\\\hline
connected & $+$ & $+$\\
pathwise connected & $+$ & $+$\\
boundary set (empty interior) & $+$ & $+$\\
$\sigma$-porous & $+$ $(*)$ & $+$ $(*)$\\
topological (covering) dim $= m$ & $+$ & $+$\\
fractal (Hausdorff) dim $= s$ & $+$ $(*)$ & $+$ $(*)$\\\hline
\multicolumn{3}{r}{$+$ inherits the property from $A$}\\
\multicolumn{3}{r}{$(*)$ provided $w_{i}$ are bi-Lipschitz}%
\end{tabular}
\end{table}

We start with the property of dimension. By $dim$ we mean either the
topological (\v{C}ech-Lebesgue covering) dimension or the fractal
(Hausdorff-Besicovitch) dimension.

\begin{thm}
[Sum theorem for dimension \cite{Engelking,Falconer}]Let $X$ be a complete
metric space and $\{F_{k}\}_{k}$ be a countable family of closed subsets of
$X$. Then
\[
dim\left(  \bigcup_{k} F_{k}\right)  = \sup_{k} dim(F_{k}).
\]

\end{thm}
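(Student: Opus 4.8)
The plan is to treat the two meanings of $dim$ separately, since they behave very differently. In each case monotonicity of dimension under inclusion gives $dim(F_{j})\le dim\bigl(\bigcup_{k}F_{k}\bigr)$ for every $j$, so it remains only to prove the reverse inequality $dim\bigl(\bigcup_{k}F_{k}\bigr)\le\sup_{k}dim(F_{k})=:m$, and we may clearly assume $m<\infty$.

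For the fractal (Hausdorff) dimension the argument is short and does not even use that the $F_{k}$ are closed. Fix any $s>m$. By the definition of Hausdorff dimension, $\mathcal{H}^{s}(F_{k})=0$ for every $k$, and since $\mathcal{H}^{s}$ is an outer measure, hence countably subadditive, we get $\mathcal{H}^{s}\bigl(\bigcup_{k}F_{k}\bigr)\le\sum_{k}\mathcal{H}^{s}(F_{k})=0$, so that $dim\bigl(\bigcup_{k}F_{k}\bigr)\le s$. Letting $s\downarrow m$ finishes this case.

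For the topological (covering) dimension this is the classical countable closed sum theorem. First I would establish the finite case: if $X=G\cup H$ with $G,H$ closed and $dim(G),dim(H)\le n$, then $dim(X)\le n$; this follows from the characterization that $dim(X)\le n$ exactly when every continuous map into the sphere $S^{n}$ defined on a closed subset of $X$ extends continuously over $X$ — one extends such a map first over $G$, then over the closed set $H$. The passage from finite to countable unions is the delicate step: it cannot be obtained by merely iterating the two-set theorem, and one invokes the enlargement theorem, valid because $X$ is metric, that any subset $M$ with $dim(M)\le n$ is contained in a $G_{\delta}$ subset $M^{*}$ with $dim(M^{*})\le n$. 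Applying this to produce $G_{\delta}$ enlargements of the $F_{k}$ inside $\bigcup_{k}F_{k}$ and combining with the finite sum theorem via a Baire-category argument yields $dim\bigl(\bigcup_{k}F_{k}\bigr)\le m$.

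The main obstacle is precisely this last step: the countable sum theorem for covering dimension genuinely requires the metric-space (or at least ``totally normal'') structure and the enlargement/decomposition machinery of dimension theory, not just normality and the two-set sum theorem. For the full details I would refer to Engelking, \textit{Dimension Theory}, where the countable closed sum theorem is proved for arbitrary metric spaces, and to Falconer for the (elementary) Hausdorff-dimension statement.
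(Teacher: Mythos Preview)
The paper does not give its own proof of this statement: it is quoted as a known result, with the citations \cite{Engelking,Falconer} attached to the theorem heading, and is then used as a black box in the proof of Theorem~\ref{th:dim}. So there is no ``paper's proof'' to compare against; your write-up is in fact more than the paper itself provides, and it ultimately defers to exactly the same two references.

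Your sketch is reasonable. The Hausdorff-dimension half is complete and correct as written. For the covering-dimension half your outline (extension characterisation for the two-set case, then enlargement to $G_{\delta}$ sets plus a Baire-type argument for the countable case) is one of the standard routes in Engelking's \emph{Dimension Theory}, and you are right to flag that the countable step genuinely needs the metric structure rather than mere normality.

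One caution, prompted by the paper itself: you open with ``monotonicity of dimension under inclusion gives $dim(F_{j})\le dim\bigl(\bigcup_{k}F_{k}\bigr)$''. For Hausdorff dimension this is unproblematic, but immediately after Theorem~\ref{th:dim} the authors explicitly warn that ``the topological dimension in general metric spaces lacks monotonicity''. Your use of monotonicity is in fact legitimate here, because each $F_{j}$ is \emph{closed} in $\bigcup_{k}F_{k}$ and covering dimension is monotone on closed subspaces of normal spaces; but you should say this rather than invoke monotonicity as if it were automatic, since the paper's own remark shows the authors are sensitive to exactly this point.
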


\begin{thm}
[Invariance theorem for dimension \cite{Engelking,Falconer}]Let $h:X\to X$ be
a homeomorphism and $E\subset X$. Then
\[
dim(h(E)) = dim(E),
\]
provided additionally that $w$ is bi-Lipschitz in the case when $dim$ is the
Hausdorff dimension.
\end{thm}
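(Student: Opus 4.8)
The plan is to reduce both assertions to the fact that $dim$, in each of its two incarnations, is an invariant of the appropriate category: the \v{C}ech--Lebesgue covering dimension is a topological invariant, and the Hausdorff--Besicovitch dimension is a bi-Lipschitz invariant. Since $h$ is a homeomorphism of $X$, its restriction $h|_{E}\colon E\to h(E)$ is a homeomorphism between these two subspaces in their subspace topologies; and if, moreover, $h$ is bi-Lipschitz on $X$, then $h|_{E}$ is bi-Lipschitz on $E$ with the same constants. So it suffices to recall why each dimension behaves as claimed under a homeomorphism, resp. a bi-Lipschitz bijection, of one space onto another, and then apply this to $\phi=h|_{E}$.

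For the covering dimension I would argue straight from the definition: $dim\,Y\le n$ iff every finite open cover of $Y$ admits a finite open refinement of order $\le n+1$. A homeomorphism $\phi\colon Y\to Z$ carries open covers of $Y$ bijectively to open covers of $Z$, preserves the refinement relation, and preserves the order of a cover (the order being the purely set-theoretic quantity: the largest number of members with nonempty common intersection). Hence $dim\,Y\le n\iff dim\,Z\le n$ for every $n$, so $dim\,Y=dim\,Z$, and with $\phi=h|_{E}$ this gives $dim(h(E))=dim(E)$.

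For the Hausdorff dimension, let $L\ge 1$ be a bi-Lipschitz constant for $h$, so that $L^{-1}d(x,y)\le d(h(x),h(y))\le L\,d(x,y)$ for all $x,y\in X$. Given $s\ge 0$ and a $\delta$-cover $\{U_{i}\}$ of $E$, the family $\{h(U_{i})\}$ is an $(L\delta)$-cover of $h(E)$ with $\operatorname{diam}h(U_{i})\le L\operatorname{diam}U_{i}$, whence $\mathcal{H}^{s}_{L\delta}(h(E))\le L^{s}\sum_{i}(\operatorname{diam}U_{i})^{s}$. Taking the infimum over such covers and letting $\delta\downarrow 0$ yields $\mathcal{H}^{s}(h(E))\le L^{s}\mathcal{H}^{s}(E)$; the reverse inequality follows by the same computation applied to $h^{-1}$, which is bi-Lipschitz with the same constant $L$. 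Thus $\mathcal{H}^{s}(E)$ and $\mathcal{H}^{s}(h(E))$ vanish for exactly the same values of $s$, and since $dim(Y)=\inf\{s\ge 0:\mathcal{H}^{s}(Y)=0\}$, we conclude $dim(h(E))=dim(E)$.

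There is no genuine obstacle here: the statement is classical and is recorded only for later use in Table~\ref{tableofinvariance}. The one point that deserves a word of care is that invariance is claimed for an \emph{arbitrary} subset $E$ (not necessarily closed, compact, or complete): this is harmless because the covering dimension and the Hausdorff dimension of a subset are defined intrinsically via its own subspace topology, resp. via covers of the subset itself, so only $h|_{E}$ is used. One should also note, to justify the proviso, that the bi-Lipschitz hypothesis cannot be dropped in the metric case — a bare homeomorphism can change Hausdorff dimension — which is exactly why the relevant rows of Table~\ref{tableofinvariance} carry the $(*)$ flag.
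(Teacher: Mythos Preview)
Your argument is correct and is the standard one: covering dimension is defined purely in terms of open covers, refinements, and their order, all of which are preserved under a homeomorphism $\phi=h|_{E}\colon E\to h(E)$; and for Hausdorff dimension the bi-Lipschitz bound on diameters gives $L^{-s}\mathcal{H}^{s}(E)\le \mathcal{H}^{s}(h(E))\le L^{s}\mathcal{H}^{s}(E)$, hence equality of the critical exponents. Your remark that only the restriction $h|_{E}$ matters, and that the bi-Lipschitz hypothesis is genuinely needed in the Hausdorff case, is also to the point.

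There is nothing to compare against in the paper itself: the statement is quoted from \cite{Engelking,Falconer} as a known background result and is not proved there. Your write-up supplies exactly the textbook justification those references contain.
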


\begin{thm}
\label{th:dim} For the attractor $A$ of $\mathcal{W}$, its fractal
continuation $\hat{B}(\vartheta)$ along $\vartheta\in\{1,\ldots,N\}^{\infty}$
and the fast basin $\hat{B}$ the formulas

\begin{enumerate}
\item[(i)] $dim(\hat{B}(\vartheta)) = dim(A)$,

\item[(ii)] $dim(\hat{B}) = dim(A)$
\end{enumerate}

hold true, provided additionally that $w_{i}$ are bi-Lipschitz in the case of
the Hausdorff dimension $dim$.
\end{thm}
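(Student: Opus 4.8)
The plan is to reduce everything to the two cited theorems (Sum theorem and Invariance theorem for dimension) applied to the descriptive formulas for $\hat{B}(\vartheta)$ and $\hat{B}$ established in Proposition~\ref{th:representation} and equation~\eqref{eq:countabledescript}.

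For part (i), I would start from $\hat{B}(\vartheta)=\bigcup_{k\geq 0}w_{\vartheta|k}^{-1}(A)$. Each set $w_{\vartheta|k}^{-1}(A)$ is the image of the closed set $A$ under the homeomorphism $w_{\vartheta|k}^{-1}=w_{\theta_1}^{-1}\circ\cdots\circ w_{\theta_k}^{-1}$, hence closed; moreover by the Invariance theorem $\dim\bigl(w_{\vartheta|k}^{-1}(A)\bigr)=\dim(A)$ (using bi-Lipschitz in the Hausdorff case, since a finite composition of bi-Lipschitz maps is bi-Lipschitz). Then the Sum theorem applied to this countable family of closed sets gives $\dim(\hat{B}(\vartheta))=\sup_k\dim\bigl(w_{\vartheta|k}^{-1}(A)\bigr)=\dim(A)$. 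For part (ii), I would do the same thing to the double union in \eqref{eq:countabledescript}: it is a countable union (the index set $\bigcup_k\{1,\ldots,N\}^k$ is countable) of sets $w_{\theta_1..\theta_k}^{-1}(A)$, each closed and of dimension $\dim(A)$ by the same argument, so the Sum theorem yields $\dim(\hat{B})=\dim(A)$. Alternatively one can note $\hat{B}=\bigcup_\vartheta\hat{B}(\vartheta)$, but that union is uncountable, so the double-union form \eqref{eq:countabledescript} is the one to use with the Sum theorem.

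The one point that needs a word of care — and the only real obstacle — is verifying the hypotheses of the two cited theorems are genuinely met: namely that the $w_{\theta_1..\theta_k}^{-1}(A)$ are \emph{closed} subsets of the complete metric space $X$ (immediate, since $A$ is closed and each $w_i^{-1}$ is a homeomorphism of $X$) and that $w_{\theta_1..\theta_k}^{-1}$ is bi-Lipschitz when $w_1,\ldots,w_N$ are (immediate, as finite compositions of bi-Lipschitz maps are bi-Lipschitz, with constant the product of the individual constants). Once these are checked the proof is essentially two lines. I would also remark that the inequality $\dim(\hat{B})\geq\dim(A)$ is trivial in any case since $A\subset\hat{B}$ (take $k=0$), so the content is entirely in the $\leq$ direction supplied by the Sum theorem.
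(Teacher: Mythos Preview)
Your proof is correct and follows exactly the paper's approach: apply the Invariance theorem to each $w_{\theta_1..\theta_k}^{-1}(A)$ and then the Sum theorem to the countable unions \eqref{eq:fractinuation} and \eqref{eq:countabledescript}; your observation that one must use \eqref{eq:countabledescript} rather than the uncountable union $\bigcup_\vartheta \hat{B}(\vartheta)$ is precisely one of the two obstacles the paper flags immediately after the proof. One caveat: your closing remark that $\dim(\hat{B})\geq\dim(A)$ is ``trivial'' from $A\subset\hat{B}$ is not actually valid for covering dimension in general metric spaces---the paper explicitly notes this failure of monotonicity as obstacle (a)---so that side comment should be dropped, though it does not affect your main argument, which yields equality directly.
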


\begin{proof}
Observe that $w_{\vartheta|k}^{-1}$ is a homeomorphism (bi-Lipschitz where
necessary), so by the invariance of dimension $dim(A) = dim(w_{\vartheta
|k}^{-1}(A))$. Putting now $F_{k} := w_{\vartheta|k}^{-1}(A))$ for $k\geq0$ in
the sum theorem yields (i) due to \eqref{eq:fractinuation}.

In the same way the sum and invariance theorems give (ii) due to \eqref{eq:countabledescript}.
\end{proof}

Our reasoning considered both cases $\hat{B}$ and $\hat{B}(\vartheta)$
separately because of the two obstacles:

\begin{enumerate}
\item[(a)] the topological dimension in general metric spaces lacks
monotonicity, so one cannot exploit the inclusion $A\subset\hat{B}%
(\vartheta)\subset\hat{B}$;

\item[(b)] the union representation in Proposition \ref{th:representation}
(iii) need not be countable.
\end{enumerate}

Next we study the connectedness of fractal continuations and the fast basin.
First note that in the realm of locally compact metric spaces (the class where
are build many geometric models) zero-dimensionality is equivalent to
hereditary disconnectedness (i.e., lack of connected nonsingleton subsets, a
property weaker than the celebrated extreme disconnectedness of the Cantor
set). By the de Groot theorem such spaces admit ultrametrization so they have
a tree-like structure. Therefore, we get for free from Theorem \ref{th:dim},
that whenever the attractor is hereditarily disconnected (has a tree-like
structure) the same holds true for its fractal continuations and the whole
fast basin. (The above discussion followed \cite{Engelking}).

\begin{thm}
[Sum theorem for connectedness \cite{Engelking}]Let $X$ be a metric space and
$\{C_{j}\}_{j}$ be a family of connected (respectively pathwise connected)
subsets of $X$ such that $\bigcap_{j} C_{j}\neq\emptyset$. Then the union
$\bigcup_{j} C_{j}$ is again connected (respectively pathwise connected).
\end{thm}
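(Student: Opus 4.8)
The plan is to prove the two assertions — connectedness and pathwise connectedness of the union — separately, each by a short direct argument anchored at a point $p\in\bigcap_j C_j$, whose existence is exactly the hypothesis.

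For the connectedness statement I would argue by contradiction. Suppose $\bigcup_j C_j$ is disconnected, so there are sets $U,V$, open in the subspace topology of $\bigcup_j C_j$, nonempty, disjoint, with $U\cup V=\bigcup_j C_j$. Fix $p\in\bigcap_j C_j$ and, after relabelling, assume $p\in U$. For each index $j$ the pair $(C_j\cap U,\,C_j\cap V)$ consists of relatively open subsets of $C_j$ that are disjoint and cover $C_j$; since $C_j$ is connected and $C_j\cap U\ni p$ is nonempty, we must have $C_j\cap V=\emptyset$. As this holds for every $j$, we obtain $V=V\cap\bigcup_j C_j=\bigcup_j(V\cap C_j)=\emptyset$, contradicting the nonemptiness of $V$. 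Hence $\bigcup_j C_j$ is connected.

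For the pathwise case I would give a direct construction of connecting paths. Fix $p\in\bigcap_j C_j$ and let $x,y\in\bigcup_j C_j$ be arbitrary, say $x\in C_{j_1}$ and $y\in C_{j_2}$. Since $C_{j_1}$ is pathwise connected and contains both $x$ and $p$, there is a continuous $\gamma_1\colon[0,1]\to C_{j_1}$ with $\gamma_1(0)=x$, $\gamma_1(1)=p$; likewise there is $\gamma_2\colon[0,1]\to C_{j_2}$ with $\gamma_2(0)=p$, $\gamma_2(1)=y$. The concatenation $\gamma$, defined by $\gamma(t)=\gamma_1(2t)$ for $t\in[0,\tfrac12]$ and $\gamma(t)=\gamma_2(2t-1)$ for $t\in[\tfrac12,1]$, is continuous, because the two pieces agree at $t=\tfrac12$ where both equal $p$, and it takes values in $C_{j_1}\cup C_{j_2}\subset\bigcup_j C_j$; thus it joins $x$ to $y$ inside the union.

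Neither step presents a genuine obstacle; the only points requiring care are bookkeeping. In the first part one must consistently interpret openness relative to the subspace $\bigcup_j C_j$ rather than relative to the ambient space $X$, and in the second part one must verify continuity of the concatenated path at the join point. I expect keeping the ambient-versus-subspace distinction straight in the connectedness argument to be the most error-prone point, although it is entirely routine.
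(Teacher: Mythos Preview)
Your proof is correct and entirely standard. Note, however, that the paper does not supply its own proof of this statement: it is quoted as a known result with a citation to Engelking's \emph{General Topology}, and is then used as a black box in the subsequent theorem on connectedness of $\hat{B}(\vartheta)$ and $\hat{B}$. There is therefore nothing to compare against; your argument is exactly the textbook proof one would expect to find behind the citation.
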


\begin{thm}
[Invariance theorem for connectedness \cite{Engelking}]Let $h:X\to X$ be a
homeomorphism and $E\subset X$ be a connected (respectively pathwise
connected) set. Then $h(E)$ is again connected (respectively pathwise connected).
\end{thm}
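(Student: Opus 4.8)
The plan is to reduce both assertions to the single elementary fact that a continuous image of a connected (respectively pathwise connected) set is connected (respectively pathwise connected); in fact the hypothesis that $h$ is a homeomorphism is much stronger than necessary here, since mere continuity of $h$ would suffice. So I would first reformulate the statement with $E$ and $h(E)$ carrying their subspace topologies inherited from $X$, and note that $h$ restricts to a continuous surjection $g\colon E\to h(E)$ (continuity passes to subspaces, surjectivity onto $h(E)$ is by definition of the image).

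For the connectedness part, I would argue by contradiction. Suppose $h(E)=U\sqcup V$ with $U,V$ nonempty and relatively open in $h(E)$. Then $E=g^{-1}(U)\sqcup g^{-1}(V)$ is a decomposition of $E$ into two relatively open sets, each nonempty by surjectivity of $g$, which contradicts the connectedness of $E$. Hence $h(E)$ is connected.

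For the pathwise connectedness part, I would take arbitrary $p,q\in h(E)$, choose $a,b\in E$ with $h(a)=p$ and $h(b)=q$, and invoke pathwise connectedness of $E$ to obtain a continuous $\gamma\colon[0,1]\to E$ with $\gamma(0)=a$ and $\gamma(1)=b$. Then $h\circ\gamma\colon[0,1]\to h(E)$ is a continuous path from $p$ to $q$, so $h(E)$ is pathwise connected.

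I do not expect any genuine obstacle: the statement merely packages standard point-set topology so that it can later be combined, exactly as the sum and invariance theorems for dimension were combined in the proof of Theorem~\ref{th:dim}, with the Sum theorem for connectedness and applied to the homeomorphisms $w_{\vartheta|k}^{-1}$. The only point requiring a little care is the bookkeeping between the topology of $X$ and the subspace topologies on $E$ and $h(E)$, and that is immediate once the corestriction $g$ is introduced.
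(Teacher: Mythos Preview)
Your argument is correct and entirely standard; indeed, as you observe, continuity of $h$ alone suffices. Note, however, that the paper does not supply its own proof of this statement: it is quoted from \cite{Engelking} as a known result and used as a black box in the subsequent theorem, so there is no proof in the paper against which to compare yours.
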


\begin{thm}
If the attractor $A$ of $\mathcal{W}$ is (pathwise) connected, then both its
fractal continuation $\hat{B}(\vartheta)$ along $\vartheta\in\{1,\ldots
,N\}^{\infty}$ and the fast basin $\hat{B}$ are (pathwise) connected.
\end{thm}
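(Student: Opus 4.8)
The plan is to mimic the proof of Theorem~\ref{th:dim}, replacing the sum and invariance theorems for dimension by their counterparts for connectedness. An advantage here is that the sum theorem for connectedness needs no countability hypothesis, only a point common to all members of the family; this is why, in contrast to Theorem~\ref{th:dim}, $\hat{B}(\vartheta)$ and $\hat{B}$ can be handled together, and why no bi-Lipschitz assumption appears (plain homeomorphism invariance suffices, matching the $+$ without $(*)$ in Table~\ref{tableofinvariance}).

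First I would record the common point. From the invariance $W(A)=A$ we get $w_i(A)\subseteq A$, and applying the bijection $w_i^{-1}$ gives $A\subseteq w_i^{-1}(A)$ for every $i$; iterating along $\vartheta$ yields $A\subseteq w_{\vartheta|k}^{-1}(A)$ for all $k\geq0$. Consequently every set of the form $w_{\theta_1..\theta_k}^{-1}(A)$ --- in particular every term of the ascending union \eqref{eq:fractinuation} and every term of \eqref{eq:countabledescript} --- contains the nonempty set $A$, since $w_{\varnothing}=e$.

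For $\hat{B}(\vartheta)$: each $w_{\vartheta|k}^{-1}(A)$ is the image of the (pathwise) connected set $A$ under the homeomorphism $w_{\vartheta|k}^{-1}$, hence (pathwise) connected by the invariance theorem for connectedness; since all of these sets contain $A\neq\emptyset$, the sum theorem for connectedness applied to \eqref{eq:fractinuation} shows $\hat{B}(\vartheta)$ is (pathwise) connected. For $\hat{B}$: by Proposition~\ref{th:representation}(iii), $\hat{B}=\bigcup_{\vartheta}\hat{B}(\vartheta)$ is a union of (pathwise) connected sets each containing $A$, so $\bigcap_{\vartheta}\hat{B}(\vartheta)\supseteq A\neq\emptyset$ and the sum theorem again gives the conclusion. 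Alternatively, one may apply the sum theorem in a single step to the indexed family $\{w_{\theta_1..\theta_k}^{-1}(A)\}$ from \eqref{eq:countabledescript}, all of whose members contain $A$.

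The only thing requiring any care is the existence of this common point, i.e.\ the identity $A\subseteq w_i^{-1}(A)$ coming from $W(A)=A$; everything else is a direct invocation of the two quoted theorems. In particular the two obstacles noted after Theorem~\ref{th:dim} --- non-monotonicity of the topological dimension and possible uncountability of the union over $\vartheta$ --- do not arise here, since the sum theorem for connectedness imposes neither monotonicity nor countability.
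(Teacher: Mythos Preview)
Your proof is correct and follows essentially the same approach as the paper: both argue that each $w_{\theta_1..\theta_k}^{-1}(A)$ is a homeomorphic copy of $A$ containing $A$, so the invariance and sum theorems for connectedness apply directly to the unions \eqref{eq:fractinuation} and \eqref{eq:countabledescript}. Your write-up is simply more explicit about deriving $A\subseteq w_i^{-1}(A)$ from $W(A)=A$ and about why the obstacles following Theorem~\ref{th:dim} disappear here.
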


\begin{proof}
One needs only to remind that according to \eqref{eq:fractinuation} and
\eqref{eq:countabledescript}, the sets $\hat{B}$ and $\hat{B}(\vartheta)$ are
build from the homeomorphic copies $w_{\theta_{1}..\theta_{k}}^{-1}(A) \supset
A$ of $A$, $\theta_{1},\ldots,\theta_{k}\in\{1,\ldots,N\}$, $k\geq0$, and the
intersection of anyhow chosen collection of these copies is nonempty. Hence
the invariance and the sum theorem for connectedness give the desired conclusion.
\end{proof}

In the end of this section we study how thin{\slash}thick attractors affect
their continuations and fast basins. Let us recall (cf.
\cite{Lucchetti,Zajicek,RPGT}) that a set $S\subset X$ is \textit{porous}
provided
\[
\exists_{0< \lambda<1}\; \exists_{r_{0}>0}\; \forall_{s\in S}\; \forall
_{0<r<r_{0}}\; \exists_{x\in X}\; N_{{\lambda} r}\{x\} \subset N_{r}%
\{s\}\setminus S,
\]
where $N_{r}\{x\} := \{y\in X: d(y,x)<r\}$ stands for an open ball. A $\sigma
$-porous set is a countable union of porous sets.

\begin{thm}
If the attractor $A$ of $\mathcal{W}$ is thin in one of the following senses:

\begin{enumerate}
\item[(i)] $A$ is $\sigma$-porous,

\item[(ii)] $A$ is boundary (i.e., the interior $int A=\emptyset$),
\end{enumerate}

then both its fractal continuation $\hat{B}(\vartheta)$ along $\vartheta
\in\{1,\ldots,N\}^{\infty}$ and the fast basin $\hat{B}$ are thin in the same
sense; provided in the case of $\sigma$-porosity that $w_{i}$ are bi-Lipschitz.
\end{thm}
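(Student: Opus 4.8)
The plan is to treat the two thinness notions in parallel but with the appropriate invariance and sum lemmas for each, exactly mirroring the structure of the preceding dimension and connectedness theorems. The key observations are that both $\hat{B}(\vartheta)$ and $\hat{B}$ are countable unions of sets of the form $w_{\theta_1..\theta_k}^{-1}(A)$ by \eqref{eq:fractinuation} and \eqref{eq:countabledescript}, and that each $w_{\theta_1..\theta_k}^{-1}$ is a homeomorphism (bi-Lipschitz when the $w_i$ are). So I need two ingredients: (a) an invariance statement --- porosity is preserved by bi-Lipschitz homeomorphisms, and having empty interior is preserved by arbitrary homeomorphisms; (b) a sum/union statement --- a countable union of $\sigma$-porous sets is $\sigma$-porous (immediate, since $\sigma$-porous already means countable union of porous sets, and a countable union of countable unions is countable), and a countable union of boundary closed sets is boundary.

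First I would dispatch case (i). A bi-Lipschitz homeomorphism $h$ with constants $L,L'$ satisfying $L^{-1}d(x,y)\le d(h(x),h(y))\le L\,d(x,y)$ sends an open ball $N_r\{x\}$ to a set sandwiched between $N_{r/L}\{h(x)\}$ and $N_{Lr}\{h(x)\}$; feeding this into the definition of porosity shows that if $S$ is porous with parameters $\lambda,r_0$ then $h(S)$ is porous with parameters $\lambda/L^2$ (say) and $r_0/L$. Hence each $w_{\theta_1..\theta_k}^{-1}(A)$ is $\sigma$-porous, and $\hat B(\vartheta)$, being a countable union of these, is $\sigma$-porous by \eqref{eq:fractinuation}; likewise $\hat B$ is $\sigma$-porous by \eqref{eq:countabledescript}. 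No monotonicity of porosity is needed, so the countability of the representation in \eqref{eq:countabledescript} (Proposition \ref{th:representation}(iii) as a union over words, not over infinite words) is exactly what makes this go through --- this is the same point flagged in obstacle (b) after Theorem \ref{th:dim}.

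For case (ii) I would argue via the complement: $\operatorname{int}A=\emptyset$ means $X\setminus A$ is dense, and since $A$ is closed, $A$ is boundary precisely when it is nowhere dense. A homeomorphism preserves nowhere density, so each $w_{\theta_1..\theta_k}^{-1}(A)$ is nowhere dense (and closed, since $w_{\theta_1..\theta_k}$ is continuous and $A$ is closed). Then I invoke the Baire category theorem: in the complete metric space $X$, a countable union of closed nowhere dense sets has empty interior --- indeed if it contained an open ball $U$, then $U$ as a complete subspace would be covered by countably many closed nowhere dense sets, contradicting Baire. Applying this to \eqref{eq:fractinuation} and \eqref{eq:countabledescript} gives $\operatorname{int}\hat B(\vartheta)=\operatorname{int}\hat B=\emptyset$.

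The main obstacle I anticipate is purely bookkeeping rather than conceptual: verifying that the building blocks $w_{\theta_1..\theta_k}^{-1}(A)$ are genuinely closed so that Baire applies in case (ii) --- this uses that $w_{\theta_1..\theta_k}$ is a continuous map of $X$ into $X$ (a composition of homeomorphisms) and $A$ is closed, hence its preimage $w_{\theta_1..\theta_k}^{-1}(A)$ is closed; note that here the set $w_{\theta_1..\theta_k}^{-1}(A)$ is simultaneously the homeomorphic image of $A$ under $(w_{\theta_1..\theta_k})^{-1}$ and the preimage of $A$ under $w_{\theta_1..\theta_k}$, by Proposition \ref{th:representation}. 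One should also note that completeness of $X$ is used only in case (ii); case (i) requires no completeness. Beyond that, the proof is a direct transcription of the ``invariance $+$ sum'' template already established, and I would write it in two short paragraphs, one per case.
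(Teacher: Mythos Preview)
Your proposal is correct and follows essentially the same approach as the paper: bi-Lipschitz invariance of porosity plus countable unions for case (i), and the Baire category theorem applied to the countable decomposition \eqref{eq:countabledescript} for case (ii). The only cosmetic difference is that the paper phrases the Baire argument via the complement (showing $X\setminus\hat{B}$ is a countable intersection of dense open sets), and it handles $\hat{B}(\vartheta)$ in case (ii) by the monotonicity $\hat{B}(\vartheta)\subset\hat{B}$ rather than by a separate application of Baire.
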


\begin{proof}
For (a) it is enough to note that the image of a porous set via bi-Lipschitz
homeomorphism (onto the whole space $X$) is again porous.

Part (b) needs the Baire category theorem \cite{AliprantisBorder}: a countable
intersection of dense open sets is dense. We consider only $\hat{B}$ which
contains the case $\hat{B}(\vartheta)\subset\hat{B}$. Keep in mind that
$h:=w_{\theta_{1}..\theta_{k}}^{-1}$ is a homeomorphism for $\theta_{1}%
,\ldots,\theta_{k}\in\{1,\ldots,N\}$, $k\geq0$. The sets $X\setminus h(A)$ are
open, because $A$ is closed. They are dense, because
\[
\overline{X\setminus h(A)} = \overline{h(X\setminus A)} = h(\overline
{X\setminus A}) = h(X\setminus int(A)) =h(X)=X
\]
due to the assumption that $int(A)=\emptyset$. By \eqref{eq:countabledescript}
we obtain that
\[
X\setminus\hat{B} = \bigcap_{k\geq0}\; \bigcap_{(\theta_{1},\theta_{2}%
,\ldots,\theta_{k})\in\{1,\ldots,N\}^{k}}\; X\setminus w_{\theta_{1}%
..\theta_{k}}^{-1}(A)
\]
is the countable intersection of dense open sets. Hence it is dense, what
means exactly that $int(\hat{B})=\emptyset$.
\end{proof}

To obtain more properties of the fast basin related to the properties of the
attractor one needs to understand better how the ascending sets $W^{-k}(A)$
sit in the fast basin
\[
\hat{B} = \bigcup_{k\geq0} W^{-k}(A).
\]
This unavoidably leads to studying the dynamics on the fast basin
(Section~\ref{sec:dynamics}).

\section{Examples}

\label{Examples}

We illustrate parts of fast basins.

Figure \ref{fastbasin010} shows part of the fast basin and, in \ red, the
attractor of the IFS
\begin{equation}
\mathtt{\mathcal{F=\{}}\mathbb{R}^{2};\frac{1}{2}(-y,x),\frac{1}%
{2}(-y+2,x),\frac{1}{2}(-y,x+2)\}\text{.} \label{IFS01}%
\end{equation}
(Here we write $(-0.5y,0.5x)$ to mean the function on $\mathbb{R}^{2}$ such
that $(x,y)\mapsto(-0.5y,0.5x)$.) The viewing window is $-6.2\leq x,y\leq6.3$.%

\begin{figure}[ptb]%
\centering
\includegraphics[
natheight=13.652900in,
natwidth=13.652900in,
height=3.4421in,
width=3.4421in
]%
{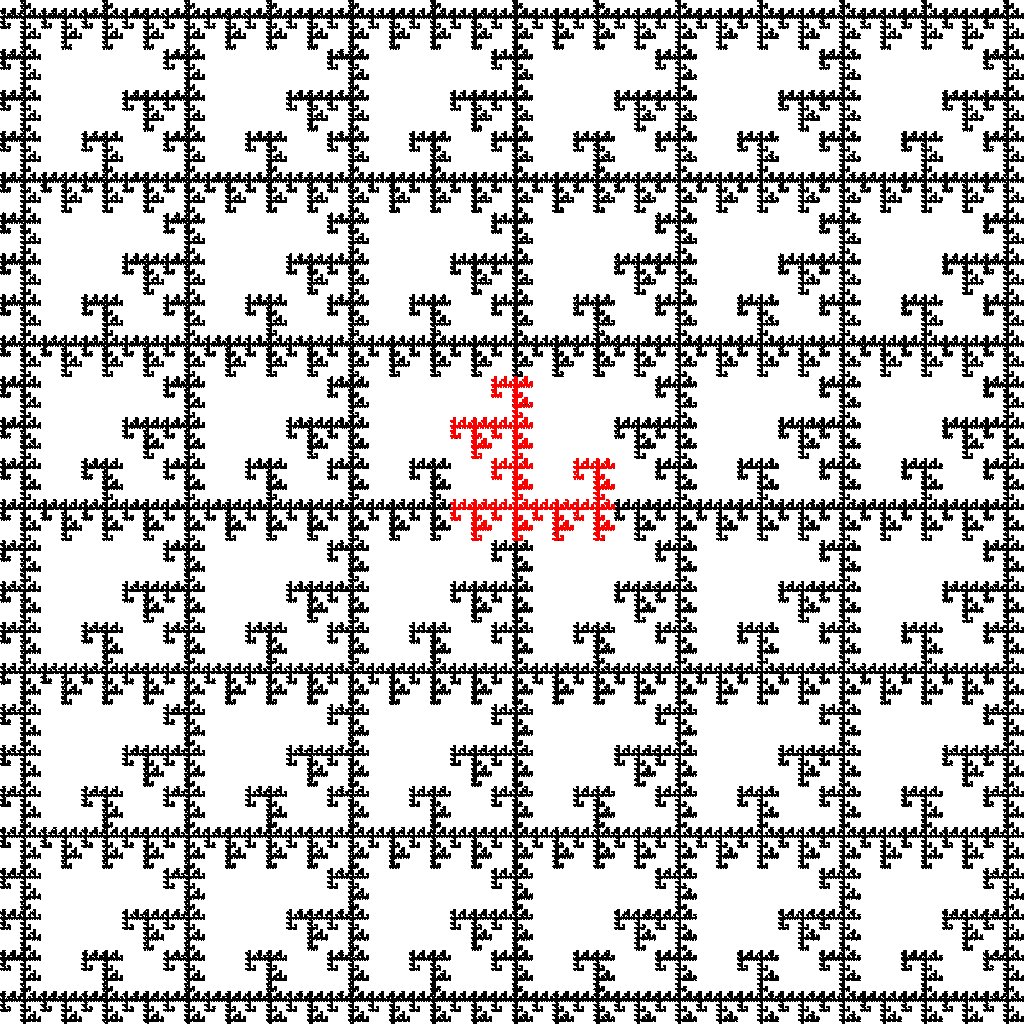}%
\caption{Attractor of the IFS (\ref{IFS01}) and the part of its fast basin
within the viewing window. See text.}%
\label{fastbasin010}%
\end{figure}
Figure \ref{fastbasin010zones} illustrates a larger region of same fast basin,
and colours encode the "generations" of the fast basin: points in the region
in black arrive in four iterations (but in no less number), and so on, as
explained in the caption.%
\begin{figure}[ptb]%
\centering
\includegraphics[
natheight=14.221800in,
natwidth=14.221800in,
height=3.5409in,
width=3.5409in
]%
{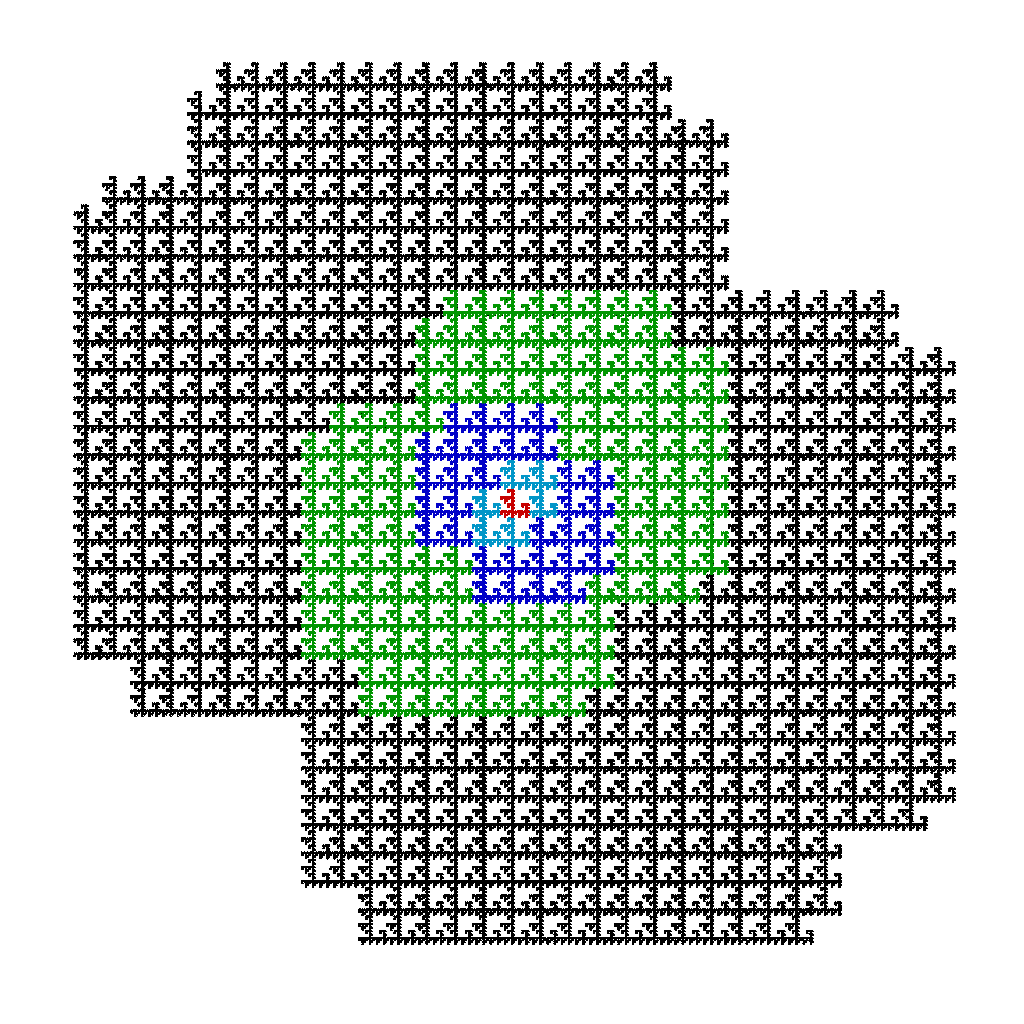}%
\caption{Generations of the fast basin that is also shown in Figure
\ref{fastbasin010}. Black points take four (and no less) iterations to arrive
on the attractor (red), green points take three (and no less) iterations, dark
blue points take two (and no less) and light blue take one iteration.}%
\label{fastbasin010zones}%
\end{figure}

Fast basin of the Kigami triangle (i.e., the Sierpi\'{n}ski triangle in
harmonic coordinates according to \cite{Kigami}).
\begin{figure}[ptb]%
\centering
\includegraphics[
natheight=13.760000in,
natwidth=18.560301in,
height=2.6451in,
width=3.5575in
]%
{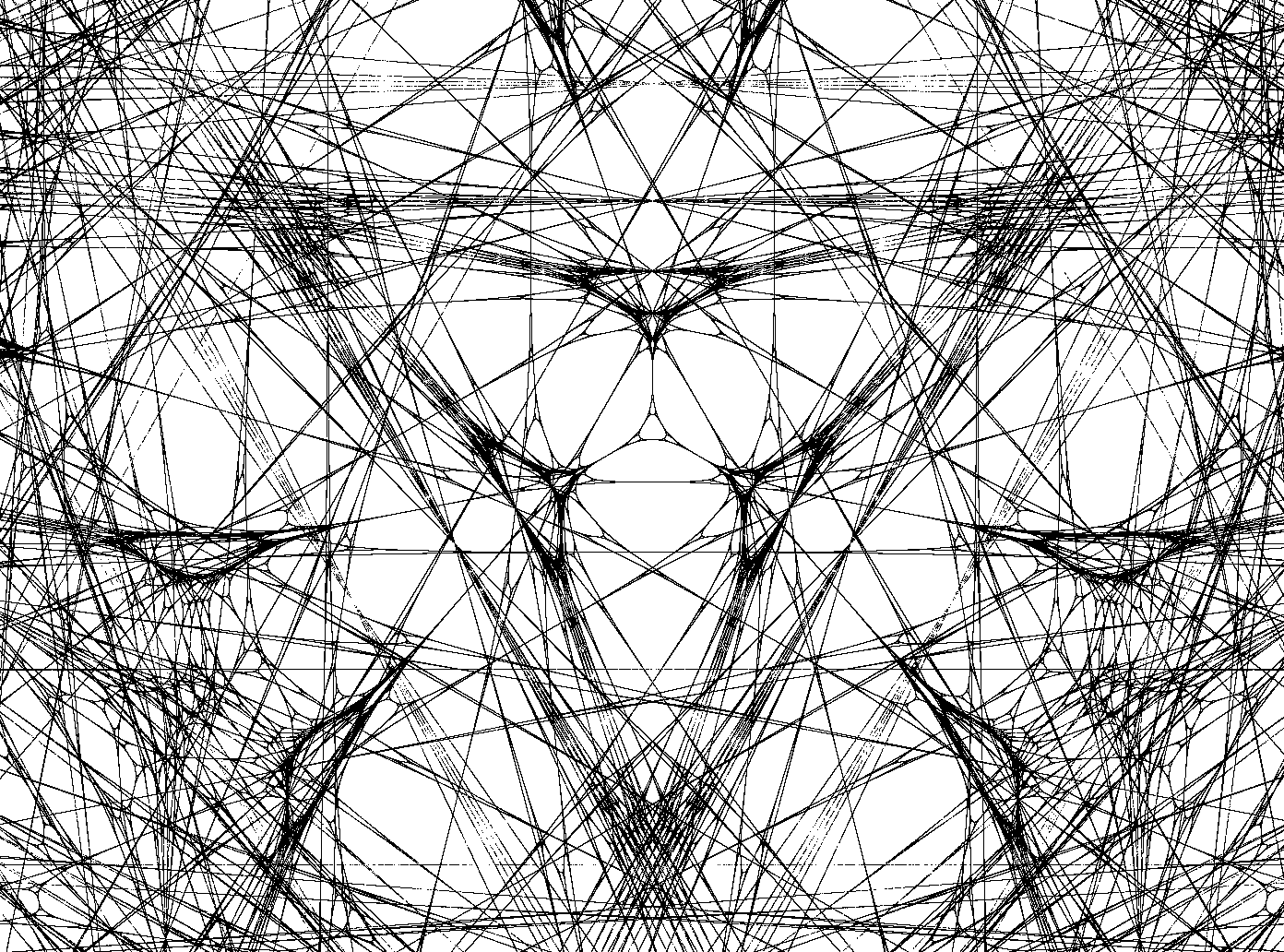}%
\caption{Illustration of part of the fast basin of the Kigami triangle. See
text}%
\label{k3}%
\end{figure}
Figure \ref{k3} shows part of the fast basin of Kigami triangle, involving
affines rather than similitudes. The IFS is
\[
\{\mathbb{R}^{2};\frac{1}{5}(2x+y,x+2y),\frac{1}{5}(3x+2,-x+y+1),\frac{1}%
{5}(x-y+1,3y+2)\}.
\]
The Kigami triangle itself is in the center of the image. In Figure
\ref{fastbasin016zones} the colours index the "generations" of the fast basin.%
\begin{figure}[ptb]%
\centering
\includegraphics[
natheight=14.221800in,
natwidth=14.221800in,
height=3.0156in,
width=3.0156in
]%
{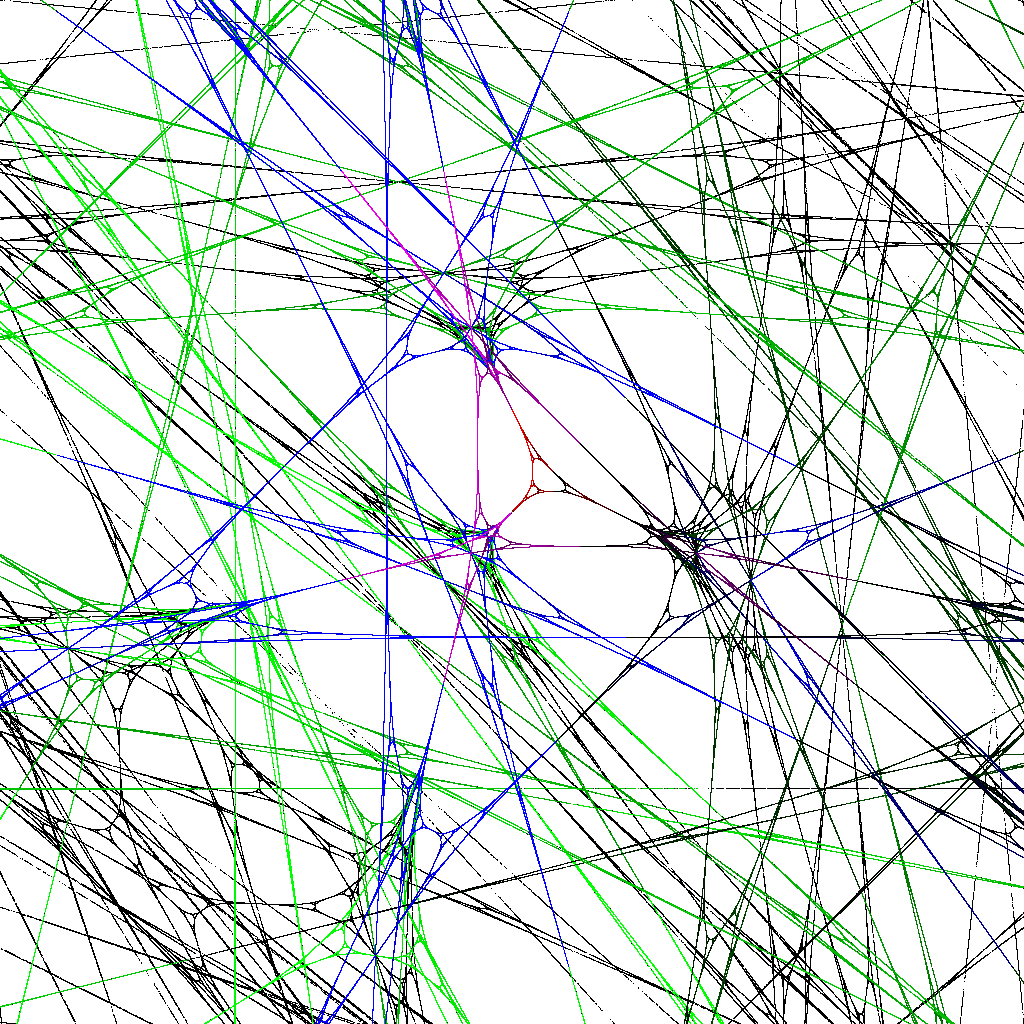}%
\caption{Some of the different "generations", in different colours, of the
fast basin of the Kigami triangle attractor. See also Figure \ref{k3}.}%
\label{fastbasin016zones}%
\end{figure}

A beautiful example of a fast basin is illustrated in Figure
\ref{fastbasin023}. The IFS in this case is%
\[
\{\mathbb{R}^{2};\frac{1}{2}(x,y+1),\frac{1}{2}(-y+1,-x+1),\frac{1}%
{2}(y+1,-x+1)\}.
\]%
\begin{figure}[ptb]%
\centering
\includegraphics[
natheight=13.333300in,
natwidth=13.333300in,
height=3.4105in,
width=3.4105in
]%
{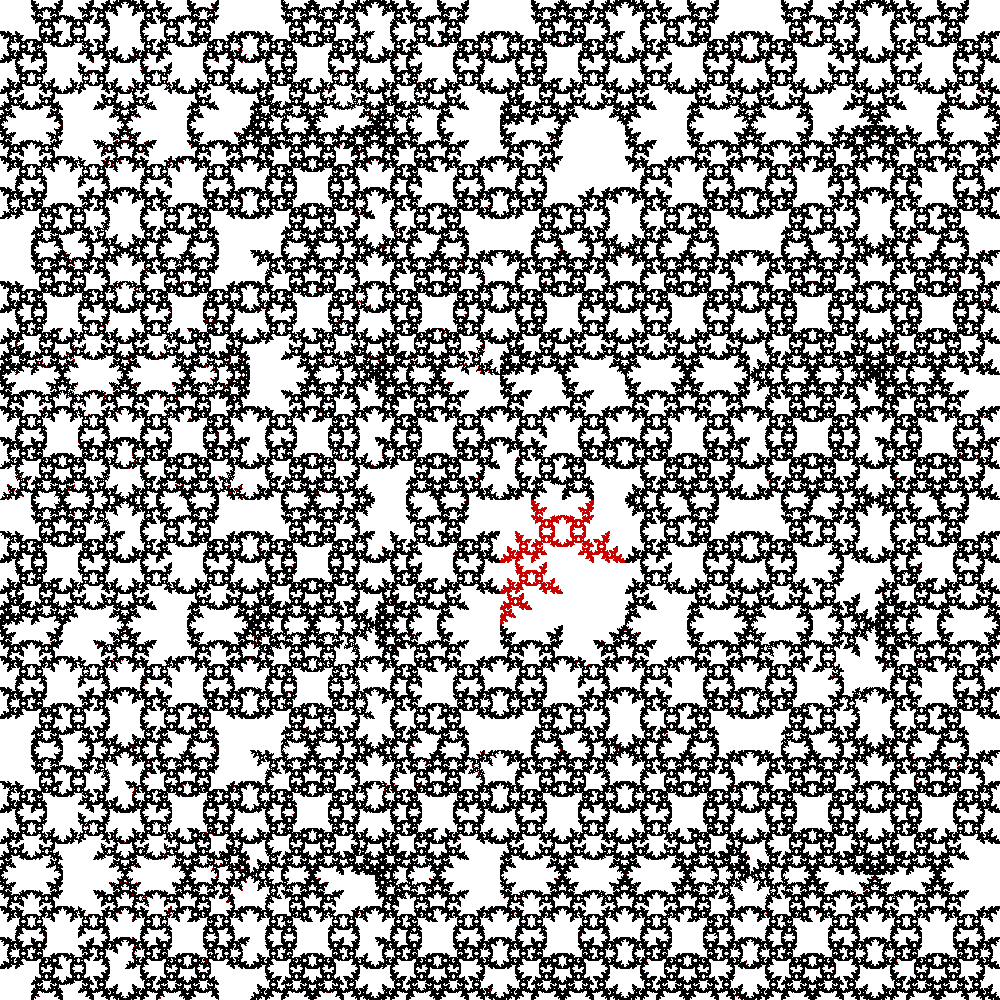}%
\caption{Part of the fast basin associated with the attractor shown in red.
See text.}%
\label{fastbasin023}%
\end{figure}
The intricate geometrical complexity of this fast basin contrasts with the
algebraic simplicity of the IFS.%
%

\begin{example}
(Fast basin reaching outside the basin.) Let $X=\mathbb{R}\cup\{\infty\}$ be
the one-point compactification of the real line. Define $w_{1}(x):=x/2$ for
$x\neq\infty$, $w_{1}(\infty):=\infty$, and
\[
w_{2}(x):=\left\{
\begin{array}
[c]{ll}%
\frac{x+3}{6-2x}, & x\not \in \{3,\infty\},\\
\infty, & x=3,\\
-\frac{1}{2}, & x=\infty.\\
&
\end{array}
\right.
\]
Then $A=[0,1]$ and $\hat{B}\not \subset B(A)$.

Since $w_{1}([0,1]) = [0,1/2]$, $w_{2}([0,1]) = [1/2,1]$, we have that
$A:=[0,1] = W(A)$ is indeed the only candidate for attractor.

The map $w_{1}$ has $0$ as an (exponential) attractor, so we study the
behavior of $w_{2}$. Firstly $w_{2}(x)\geq w_{2}(3/2)=3/2$ for $3>x\geq3/2$,
so $[3/2,3)\subset X\setminus B(A)$. Moreover $w_{1}(\infty)=w_{2}%
(3)=\infty\not \in B(A)$, so $3\not \in B(A)$. Secondly $w_{2}(x)<x$ for
$x\in(1,3/2)$. Thirdly $w_{2}(1)=1$ and the derivative $0<w_{2}^{\prime
}(x)\leq3/4$ for $x\leq1$. Therefore
\[
B(A) = \mathbb{R}\setminus[3/2,\infty).
\]

Now observe
\[
w_{1}^{-n}([3/2,3)) = 2^{n}\cdot[3/2,3),
\]
\[
[3/2,\infty) = \bigcup_{n=0}^{\infty} 2^{n}\cdot[3/2,3) \subset X\setminus
B(A),
\]
\[
w_{1}^{n+1}(2^{n}\cdot3) = w_{1}(3/2) = 3/4\in A,
\]
\[
\{3\cdot2^{n}: n\geq1\} \subset\hat{B}\setminus B(A).
\]
Altogether $\hat{B}\not \subset B(A)$.
\end{example}

\section{The dynamics on the fast basin}

\label{sec:dynamics}

The first observation expresses how it is "easy" to escape the fast basin
$\hat{B}$ : the orbit of any point not on the fast basin does not meet the
fast basin.

\begin{prop}
If $x\not \in \hat{B}$, then $W(x)\cap\hat{B}=\emptyset$.
\end{prop}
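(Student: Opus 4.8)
The plan is to argue by contraposition: I will show that if some point $y\in W(x)$ lies in $\hat{B}$, then $x$ itself must lie in $\hat{B}$. The key tool is the characterization of the fast basin from Proposition~\ref{th:representation}(ii), namely that $z\in\hat{B}$ iff there exist $k\geq 0$ and a finite word $(\theta_1,\ldots,\theta_k)$ with $w_{\theta_1..\theta_k}(z)\in A$. So I would start from a point $y\in W(x)\cap\hat{B}$, unwind what membership in $W(x)$ means, and then prepend one symbol to the word that witnesses $y\in\hat{B}$.

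First I would note that $y\in W(x)=\bigcup_{i=1}^N w_i(x)$ means $y=w_j(x)$ for some index $j\in\{1,\ldots,N\}$. Next, since $y\in\hat{B}$, Proposition~\ref{th:representation}(ii) gives $k\geq 0$ and $(\theta_1,\ldots,\theta_k)\in\{1,\ldots,N\}^k$ with $w_{\theta_1..\theta_k}(y)\in A$. Then I would simply compose: $w_{\theta_1..\theta_k}(w_j(x)) = w_{\theta_1..\theta_k j}(x)\in A$, where $(\theta_1,\ldots,\theta_k,j)\in\{1,\ldots,N\}^{k+1}$ is the word obtained by appending $j$. By Proposition~\ref{th:representation}(ii) applied to $x$ (with the word of length $k+1$), this shows $x\in\hat{B}$. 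Taking the contrapositive: if $x\notin\hat{B}$ then no $y\in W(x)$ can lie in $\hat{B}$, i.e.\ $W(x)\cap\hat{B}=\emptyset$.

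Honestly, there is no real obstacle here; the statement is a direct consequence of the orbit-based description of $\hat{B}$, and the only thing to be careful about is the bookkeeping of word concatenation (that prepending one letter to a length-$k$ word gives a legitimate length-$(k+1)$ word, and that $w_{\theta_1..\theta_k}\circ w_j = w_{\theta_1..\theta_k j}$, which is immediate from the definition $w_{\theta_1..\theta_k}=w_{\theta_1}\circ\cdots\circ w_{\theta_k}$). One could alternatively phrase the whole thing set-theoretically via $\hat{B}=\bigcup_{k\geq 0}W^{-k}(A)$ and the identity $W^{-1}\big(\bigcup_{k\geq 0}W^{-k}(A)\big)=\bigcup_{k\geq 1}W^{-k}(A)\subset\hat{B}$, which says exactly that $x\notin\hat{B}$ forces $W(x)\cap\hat{B}=\emptyset$; but the word-based version is the cleanest to write out.
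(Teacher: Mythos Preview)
Your proof is correct and follows essentially the same route as the paper: assume some $y=w_j(x)\in W(x)$ lies in $\hat{B}$, take a witnessing word for $y$, and concatenate the symbol $j$ to obtain a witnessing word for $x$, contradicting $x\notin\hat{B}$. The only cosmetic point is a slight slip in your prose between ``prepend'' and ``append''; your displayed identity $w_{\theta_1..\theta_k}\circ w_j = w_{\theta_1..\theta_k j}$ (appending $j$) is the correct one under the paper's convention $w_{\theta_1..\theta_k}=w_{\theta_1}\circ\cdots\circ w_{\theta_k}$.
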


\begin{proof}
Ad absurdum suppose that some $y=w_{\theta}(x)\in W(x)$, $\theta\in
\{1,\ldots,N\}$, falls into $\hat{B}$. Then
\[
w_{\theta\theta_{1}..\theta_{k}}(x) = w_{\theta_{1}..\theta_{k}}(y)\in A,
\]
which leads to $x\in\hat{B}$.
\end{proof}

The next result explains when the fast basin is trivial in terms of the action
of the IFS on the attractor.

\begin{prop}
\label{th:criterion} The following are equivalent:

\begin{enumerate}
\item[(i)] $\hat{B} \neq A$,

\item[(ii)] $w_{i}^{-1}(A)\neq A$ for some $i=1,\ldots,N$,

\item[(iii)] $w_{i}(A)\neq A$ for some $i=1,\ldots,N$.
\end{enumerate}
\end{prop}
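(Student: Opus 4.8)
The plan is to prove the chain of implications (i)$\Rightarrow$(iii)$\Rightarrow$(ii)$\Rightarrow$(i), using the invariance $W(A)=A$ together with the representations in Proposition \ref{th:representation} and formula \eqref{eq:countabledescript}. The equivalence of (ii) and (iii) is almost immediate: since $w_i$ is a bijection of $X$ and $w_i(A)\subset A$ would combine with $W(A)=A$ to be relevant, one should first observe that $w_i^{-1}(A)=A$ is equivalent to $w_i(A)=A$ (apply $w_i$ or $w_i^{-1}$ to both sides of the set equality); negating gives (ii)$\Leftrightarrow$(iii) with no further work.

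For (i)$\Rightarrow$(ii), argue by contraposition. Suppose $w_i^{-1}(A)=A$ for every $i=1,\ldots,N$. Then by Lemma-(ii) applied inductively, $w_{\theta_1..\theta_k}^{-1}(A)=A$ for every word $(\theta_1,\ldots,\theta_k)$ and every $k\ge 0$. Feeding this into the descriptive formula \eqref{eq:countabledescript} gives $\hat{B}=\bigcup_{k\ge 0}\bigcup_{(\theta_1,\ldots,\theta_k)} w_{\theta_1..\theta_k}^{-1}(A)=A$, which is the negation of (i). For the remaining direction (ii)$\Rightarrow$(i) (equivalently, using (ii)$\Leftrightarrow$(iii), take (iii)$\Rightarrow$(i)): assume $w_i(A)\neq A$ for some $i$. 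Since $w_i(A)\subset W(A)=A$, the inclusion $w_i(A)\subset A$ is strict, so there exists $a\in A\setminus w_i(A)$. Because $w_i$ is a homeomorphism of $X$ onto $X$, the point $x_0:=w_i^{-1}(a)$ is a well-defined element of $X$, and $x_0\notin A$ (otherwise $a=w_i(x_0)\in w_i(A)$, a contradiction). On the other hand $w_i(x_0)=a\in A$, so by Proposition \ref{th:representation}(ii) (with the length-one word $(\,i\,)$) we get $x_0\in\hat{B}$. Hence $x_0\in\hat{B}\setminus A$, giving $\hat{B}\neq A$.

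Two small points deserve care, and the second is the only mild obstacle. First, one must note $A\subset\hat{B}$ always holds (take $k=0$, so that $w_\varnothing=e$ and $e(a)=a\in A$), so that $\hat{B}\neq A$ is genuinely witnessed by a point outside $A$. Second, in the step (ii)$\Rightarrow$(i) above I used the $N=1$-style argument with a single map $w_i$; this is legitimate because the fast basin contains $w_i^{-1}(A)$ as one of the sets in the union \eqref{eq:countabledescript}, so strictness of $w_i(A)\subset A$ already forces $\hat{B}$ to be strictly larger than $A$ regardless of the other maps. The cleanest write-up is therefore: (i)$\Leftrightarrow$``$w_{\theta_1..\theta_k}^{-1}(A)=A$ for all words fails'' via \eqref{eq:countabledescript}, which via Lemma-(ii) and induction reduces to the single-generator condition (ii); and (ii)$\Leftrightarrow$(iii) is the bijectivity remark. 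The induction in Lemma-(ii) (that all finite compositions fix $A$ once the generators do) is the one spot where one should spell out the inductive step, but it is entirely routine given $w_{\theta_1..\theta_k}^{-1}=w_{\theta_1}^{-1}\circ\cdots\circ w_{\theta_k}^{-1}$.
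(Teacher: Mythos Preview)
Your proof is correct and follows essentially the same approach as the paper: both use the descriptive formula \eqref{eq:countabledescript} (equivalently $\hat{B}=\bigcup_{k\ge 0}W^{-k}(A)$) together with the invariance $w_i(A)\subset A$ to link (i) and (ii), and invoke bijectivity of the $w_i$ for (ii)$\Leftrightarrow$(iii). The only cosmetic difference is that for (ii)$\Rightarrow$(i) the paper argues by contraposition via the inclusion chain $A\subset w_i^{-1}(A)\subset W^{-1}(A)\subset\hat{B}$, whereas you construct an explicit witness $x_0=w_i^{-1}(a)\in\hat{B}\setminus A$; both are immediate once $w_i(A)\subsetneq A$ is known.
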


\begin{proof}
Recall that $W^{-1}(A) = \bigcup_{i=1}^{N}\; w_{i}^{-1}(A)$ and $\hat{B}=
\bigcup_{k\geq0}\; W^{-k}(A)$. If $\hat{B}=A$, then $\hat{B}\supset
W^{-1}(A)=A$. Conversely, if $W^{-1}(A)=A$, then $W^{-k}(A)=A$ for all
$k\geq0$, so $\hat{B}=A$.

By the invariance of $A$ for all $i=1,\ldots,N$, $w_{i}(A)\subset A$, and so
$w_{i}^{-1}(A)\supset A$. Thus $W^{-1}(A)\neq A$ is possible if and only if
$w_{i}^{-1}(A)\neq A$ for some $i=1,\ldots,N$. This establishes that (i) and
(ii) are equivalent.

Equivalence of (ii) and (iii) is a consequence of the bijectivity of $w_{i}$.
\end{proof}

The assumption that the maps $w_{i}$ are homeomorphisms onto the whole space
was crucial in the criterion for the fast basin to be nontrivial (cf. the fast
basin of the Julia set).

\begin{example}
Let $X:= [0,1]\times[1/2,\infty)\subset\mathbb{R}^{2}$ be endowed with the
metric induced from the Euclidean distance in the plane. Define $w_{1}(x,y):=
(x/2,\sqrt{y})$, $w_{2}(x,y):= (x/2+1/2,\sqrt{y})$, for $(x,y)\in X$. The
attractor of $(X;w_{1},w_{2})$ is $A=[0,1]\times\{1\}$. Maps $w_{i}$ are
homeomorphisms onto their images $w_{i}: X\to w_{i}(X)$. However
$w_{i}(A)\subsetneq A$ despite $w_{i}^{-1}(A)=A$ for $i=1,2$. The fast basin
$\hat{B}=A$ is trivial here.
\end{example}

From Proposition \ref{th:criterion} we have also an improvement upon
Proposition \ref{th:representation} (iii).

\begin{prop}
Let $I:= \{i=1,\ldots,N : w_{i}(A)\subsetneq A\}$ and $\theta_{1}%
,\ldots,\theta_{k}\in\{1,\ldots,N\}$. Then

\begin{enumerate}
\item[(a)] $w_{\theta_{1}..\theta_{k}\sigma}^{-1}(A) \supsetneq w_{\theta
_{1}..\theta_{k}}^{-1}(A)$ exactly when $\sigma\in I$,

\item[(b)] $\hat{B}=\bigcup_{\vartheta\in I^{\infty}}\hat{B}(\vartheta)\cup
A$.\texttt{ }
\end{enumerate}
\end{prop}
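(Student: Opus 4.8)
The plan is to leverage Proposition~\ref{th:criterion} in the sharpened form: for a single index $\sigma$ and a fixed homeomorphism $h=w_{\theta_1..\theta_k}^{-1}$, applying $h$ to the inclusion $w_\sigma^{-1}(A)\supseteq A$ (with equality exactly when $\sigma\notin I$, by the equivalence of (ii) and (iii) applied to each $w_\sigma$) will give part (a). For part (b), I would then iterate part (a) along a word to see which continuations $\hat B(\vartheta)$ actually contribute new points beyond $A$, and use Proposition~\ref{th:representation}(iii) to reassemble $\hat B$.

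First I would establish (a). Since $A$ is invariant, $w_\sigma(A)\subseteq A$ for every $\sigma$, hence $A\subseteq w_\sigma^{-1}(A)$, and by the bijectivity argument in the proof of Proposition~\ref{th:criterion} this inclusion is strict precisely when $w_\sigma(A)\subsetneq A$, i.e. when $\sigma\in I$. Now apply $h=w_{\theta_1..\theta_k}^{-1}$, a bijection $X\to X$, to both sides: since $h$ is injective, $h(w_\sigma^{-1}(A))\supseteq h(A)$ is strict iff $w_\sigma^{-1}(A)\supseteq A$ is strict. But $h\circ w_\sigma^{-1} = w_{\theta_1..\theta_k}^{-1}\circ w_\sigma^{-1} = w_{\theta_1..\theta_k\sigma}^{-1}$ and $h(A)=w_{\theta_1..\theta_k}^{-1}(A)$, which is exactly the claimed statement $w_{\theta_1..\theta_k\sigma}^{-1}(A)\supsetneq w_{\theta_1..\theta_k}^{-1}(A)$ iff $\sigma\in I$.

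Next, for (b), I would argue two inclusions. The inclusion $\bigcup_{\vartheta\in I^\infty}\hat B(\vartheta)\cup A\subseteq\hat B$ is immediate from Proposition~\ref{th:representation}(iii) together with $A\subseteq\hat B$. For the reverse, take $x\in\hat B$; by \eqref{eq:countabledescript} there is a word $(\theta_1,\ldots,\theta_k)$ with $x\in w_{\theta_1..\theta_k}^{-1}(A)$. If all of $\theta_1,\ldots,\theta_k$ lie in $I$, extend to any word $\vartheta\in I^\infty$ with $\vartheta|k=(\theta_1,\ldots,\theta_k)$ and conclude $x\in\hat B(\vartheta)$. Otherwise, I would prune the word: using (a), if $\theta_j\notin I$ then inserting or deleting $\theta_j$ does not change the set $w_{\cdots}^{-1}(A)$ at the relevant stage. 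More carefully, I would show by induction on the number of non-$I$ letters that $w_{\theta_1..\theta_k}^{-1}(A)$ equals $w_{\tau_1..\tau_m}^{-1}(A)$ for the subword $(\tau_1,\ldots,\tau_m)$ obtained by deleting every letter not in $I$ — here one uses that deleting a non-$I$ letter $\theta_j$ amounts, after applying the homeomorphism $w_{\theta_1..\theta_{j-1}}^{-1}$ to the identity $w_{\theta_j}^{-1}(A)=A$, to replacing $w_{\theta_1..\theta_j}^{-1}(A)$ by $w_{\theta_1..\theta_{j-1}}^{-1}(A)$, and the remaining letters $\theta_{j+1},\ldots,\theta_k$ can be handled since $w_{\theta_1..\theta_{j-1}\theta_{j+1}..\theta_k}^{-1} = w_{\theta_1..\theta_{j-1}}^{-1}\circ w_{\theta_{j+1}..\theta_k}^{-1}$ and the same identity propagates. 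If $m=0$, then $x\in A$; if $m\geq 1$, extend $(\tau_1,\ldots,\tau_m)$ to $\vartheta\in I^\infty$ and get $x\in\hat B(\vartheta)$.

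The main obstacle I anticipate is the bookkeeping in the pruning step: one must be careful that deleting a non-$I$ letter from the middle of a word, rather than from the end, still leaves the inverse-image set unchanged. The clean way is to peel letters off the \emph{front} — note $w_{\theta_1..\theta_k}^{-1}(A)=w_{\theta_1}^{-1}(w_{\theta_2..\theta_k}^{-1}(A))$ — but $w_{\theta_2..\theta_k}^{-1}(A)$ need not equal $A$, so the identity $w_{\theta_1}^{-1}(A)=A$ does not directly apply; one instead needs that $w_{\theta_2..\theta_k}^{-1}(A)$ is $w_{\theta_1}^{-1}$-invariant, which fails in general. Hence the correct induction is on the \emph{last} non-$I$ letter, using part (a) in the form: the set $w_{\theta_1..\theta_k}^{-1}(A)$ is unchanged if a trailing block of the word is replaced by its $I$-subword, and then recursing inward. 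Making this induction precise, with the homeomorphisms $w_{\theta_1..\theta_j}^{-1}$ used to transport the local identity $w_{\theta_{j+1}}^{-1}(A)=A$ outward, is the one place that needs genuine care; everything else is a direct application of (a) and Proposition~\ref{th:representation}(iii).
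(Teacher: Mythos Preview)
Your argument for part~(a) is correct and is exactly what the paper has in mind: the equivalence $w_\sigma^{-1}(A)=A \Leftrightarrow w_\sigma(A)=A$ from Proposition~\ref{th:criterion}, transported by the bijection $w_{\theta_1..\theta_k}^{-1}$.

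For part~(b), however, the pruning step has a genuine gap that you partly identify but do not close. The obstruction you flag for front-peeling applies equally to your proposed fix. Suppose $\theta_j\notin I$ is the last non-$I$ letter, so $\theta_{j+1},\ldots,\theta_k\in I$. Deleting $\theta_j$ would require
\[
w_{\theta_j}^{-1}\bigl(w_{\theta_{j+1}..\theta_k}^{-1}(A)\bigr)=w_{\theta_{j+1}..\theta_k}^{-1}(A),
\]
but all you know is $w_{\theta_j}^{-1}(A)=A$; since $w_{\theta_{j+1}..\theta_k}^{-1}(A)\supsetneq A$ (strictly, by (a)), there is no reason $w_{\theta_j}^{-1}$ should fix this larger set. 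Part~(a) lets you drop \emph{trailing} non-$I$ letters, nothing more; once an $I$-letter sits to the right of a non-$I$ letter, your induction stalls. The phrase ``the same identity propagates'' is precisely where the argument breaks.

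The paper itself records the proposition without proof, citing only Proposition~\ref{th:criterion}, so there is no alternative argument to compare against. What your outline is missing is either an argument that the strict-attractor hypothesis forces the needed invariance, or a different strategy that avoids deleting interior letters --- for instance, showing only the containment $w_{\theta_1..\theta_k}^{-1}(A)\subseteq w_{\tau_1..\tau_m}^{-1}(A)$ for some word $\tau$ over $I$, which is all (b) actually requires.
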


Further we consider the reverse dynamics $\mathcal{W}^{-1}:=(\hat{B}%
;w_{1}^{-1},\ldots,w_{N}^{-1})$ on the fast basin. We assume that all
$w_{i}^{-1}$ are $L>1$ expansive, i.e.,
\[
\forall{y_{1},y_{2}\in X},d(w_{i}^{-1}(y_{1}),w_{i}^{-1}(y_{2}))\geq L\cdot
d(y_{1},y_{2}).
\]
Moreover we assume that $A$, an attractor of $(X;w_{1},\ldots,w_{N})$, is
connected. The reversed dynamics $y_{n}=w_{\theta_{1}...\theta_{n}}^{-1}%
(y_{0})$ on $\hat{B}$ has two opposite components.

\begin{enumerate}
\item[(a)] Outside big enough disks everything on the fast basin is
"immediately taken away"; there is no "wandering around". This is precisely
stated in Proposition~\ref{expans}.

\item[(b)] Given a disk $D\supset A$ around the attractor, we have that the
reverse trajectories $y_{n}$ starting at attractor, $y_{0}\in A$, can have
arbitrarily large escape from disk times, namely
\begin{equation}
\label{eq:infinitetime}\sup_{y_{1}\neq y_{0}\in A} t(y_{0}) =\infty,
\end{equation}
where
\[
t(y_{0}):= \sup\{n: \forall_{m\leq n} y_{m}\in D\}.
\]

\end{enumerate}

From the above observations it follows that the whole intricate structure of
the fast basin is produced nearby the attractor and then flushed into the
whole space (look at Figures \ref{k3} and \ref{fastbasin016zones}).

\begin{lem}
\label{encloseball} For $x_{0}\in X$, $1<\tilde{L}<L$ there exists $r_{0}>0$
s.t. for all $r\geq r_{0}$, $i=1,{\ldots},N$
\[
w_{i}^{-1}(X\setminus D(x_{0},r)) \subset X\setminus D(x_{0},\tilde{L}r).
\]

\end{lem}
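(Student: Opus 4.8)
The plan is to convert the expansiveness hypothesis into a contraction statement for the forward maps and then argue by contraposition. Substituting $y_j=w_i(x_j)$ into the inequality $d(w_i^{-1}(y_1),w_i^{-1}(y_2))\ge L\,d(y_1,y_2)$ shows that every $w_i$ is a $\tfrac1L$-contraction on all of $X$, i.e. $d(w_i(x_1),w_i(x_2))\le\tfrac1L\,d(x_1,x_2)$ for all $x_1,x_2\in X$. Since $N$ is finite, the number $c:=\max_{1\le i\le N}d(w_i(x_0),x_0)$ is a finite constant depending only on $x_0$ and $\mathcal{W}$; this is the device that will make the eventual choice of $r_0$ uniform in $i$.

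Next I would unravel what must be shown: after substituting $y=w_i(z)$, the inclusion $w_i^{-1}(X\setminus D(x_0,r))\subset X\setminus D(x_0,\tilde L r)$ is equivalent to the implication that $z\in D(x_0,\tilde L r)$ forces $w_i(z)\in D(x_0,r)$. So I fix $i$, take $z$ with $d(z,x_0)\le\tilde L r$, and estimate by the triangle inequality together with the contraction bound:
\[
d(w_i(z),x_0)\le d(w_i(z),w_i(x_0))+d(w_i(x_0),x_0)\le\tfrac1L\,d(z,x_0)+c\le\tfrac{\tilde L}{L}\,r+c.
\]

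It then suffices to pick $r_0$ so large that $\tfrac{\tilde L}{L}\,r+c\le r$ for all $r\ge r_0$; because $\tilde L<L$ the coefficient $1-\tfrac{\tilde L}{L}=\tfrac{L-\tilde L}{L}$ is strictly positive, so any $r_0\ge\tfrac{cL}{L-\tilde L}$ works (and if $c=0$ any $r_0>0$ does), and the same $r_0$ serves every index $i$ thanks to the maximum in the definition of $c$. Feeding this back through the chain of implications yields the asserted inclusion for all $r\ge r_0$ and all $i$. I do not expect a genuine obstacle here; the only points that deserve a moment's care are inverting the expansiveness hypothesis into the $\tfrac1L$-Lipschitz bound for $w_i$ (and not for $w_i^{-1}$) and absorbing the finitely many indices into the single constant $c$. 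The argument is otherwise routine and is insensitive to whether $D(x_0,r)$ denotes the open or the closed ball, since only the cut-off radius is affected (and the case where $X\setminus D(x_0,r)$ is empty is trivial).
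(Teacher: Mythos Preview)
Your argument is correct and is essentially the contrapositive of the paper's own proof. The paper works directly with the expansive maps $w_i^{-1}$: for $x\notin D(x_0,r)$ it bounds $d(w_i^{-1}(x),x_0)\ge d(w_i^{-1}(x),w_i^{-1}(x_0))-d(w_i^{-1}(x_0),x_0)>Lr-\rho\ge\tilde Lr$, with $\rho$ a uniform bound on $d(w_i^{-1}(x_0),x_0)$ and $r_0=\rho/(L-\tilde L)$; you instead pass to the $\tfrac1L$-contractions $w_i$ and show $w_i(D(x_0,\tilde Lr))\subset D(x_0,r)$ via the forward triangle inequality, with the roles of $\rho$ and your $c$ exchanged. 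Same idea, same one-line estimate, same device of absorbing the finitely many indices into a single constant.
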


\begin{proof}
Find $\rho$ s.t. $D(x_{0},\rho) \supset\{w_{i}^{-1}(x_{0}): i=1,{\ldots},N\}$.
Next assign $r_{0}:= \rho{\slash}(L-\tilde{L})$. Thus $\tilde{L}r\leq Lr-\rho$
for $r\geq r_{0}$ and one readily verifies that
\[
d(w_{i}^{-1}(x),x_{0}) \geq|d(w_{i}^{-1}(x),w_{i}^{-1}(x_{0})) - d(x_{0}%
,w_{i}^{-1}(x_{0}))| > Lr-\rho
\]
for $x\not \in D(x_{0},r)$, $i=1,\ldots,N$.
\end{proof}

\begin{prop}
\label{expans} Let $x_{0}\in X$, $1<\tilde{L}<L$. Then there exists $r_{0}>0$
s.t. $d(y_{n+m},x_{0})\geq\tilde{L}^{n}\cdot d(y_{m},x_{0})$ whenever
$d(y_{m},x_{0})>r_{0}$
\end{prop}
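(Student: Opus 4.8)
The plan is to iterate Lemma~\ref{encloseball} and then induct on $n$. First I would fix $x_0 \in X$ and $1 < \tilde L < L$, and apply Lemma~\ref{encloseball} to obtain the radius $r_0 > 0$ such that for all $r \geq r_0$ and all $i = 1, \ldots, N$ we have $w_i^{-1}(X \setminus D(x_0, r)) \subset X \setminus D(x_0, \tilde L r)$. The key point is that $\tilde L r \geq r \geq r_0$, so the conclusion of the lemma can be fed back into itself: once a point lies outside $D(x_0, r)$ with $r \geq r_0$, each further application of any $w_i^{-1}$ pushes it outside a disk whose radius has grown by a factor $\tilde L$, and that new radius still exceeds $r_0$, so the hypothesis of the lemma remains satisfied at the next step.

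Concretely, suppose $d(y_m, x_0) > r_0$, and write $y_{m+j} = w_{\theta_{m+1} \ldots \theta_{m+j}}^{-1}(y_m)$ for the reversed trajectory. I would prove by induction on $n \geq 0$ that $d(y_{m+n}, x_0) \geq \tilde L^{n} \cdot d(y_m, x_0)$. The base case $n = 0$ is trivial. For the inductive step, set $r := \tilde L^{n} \cdot d(y_m, x_0)$; by the inductive hypothesis $d(y_{m+n}, x_0) \geq r$, and since $d(y_m,x_0) > r_0$ we have $r \geq r_0$, so Lemma~\ref{encloseball} applies with this $r$ and with $i = \theta_{m+n+1}$. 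Because $y_{m+n} \notin D(x_0, r)$, we get $y_{m+n+1} = w_{\theta_{m+n+1}}^{-1}(y_{m+n}) \notin D(x_0, \tilde L r)$, i.e. $d(y_{m+n+1}, x_0) \geq \tilde L r = \tilde L^{n+1} \cdot d(y_m, x_0)$, completing the induction.

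The only mild subtlety — and the thing I would watch out for — is the monotonicity bookkeeping: one must be sure that at every stage the radius $\tilde L^{j} d(y_m,x_0)$ appearing as the argument of the disk is $\geq r_0$, which holds precisely because $\tilde L > 1$ and $d(y_m, x_0) > r_0$, so the sequence of radii is increasing and never drops below $r_0$. There is no genuine obstacle here; Lemma~\ref{encloseball} does all the real work and this proposition is essentially its iterated form. I would conclude by simply noting that the inequality $d(y_{n+m}, x_0) \geq \tilde L^{n} d(y_m, x_0)$ is exactly the claimed statement.

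\begin{proof}
Let $r_0 > 0$ be the radius furnished by Lemma~\ref{encloseball} for the given $x_0$ and $1 < \tilde L < L$, so that $w_i^{-1}(X \setminus D(x_0, r)) \subset X \setminus D(x_0, \tilde L r)$ for every $r \geq r_0$ and every $i = 1, \ldots, N$. Suppose $d(y_m, x_0) > r_0$, where $y_{m+j} = w_{\theta_{m+1} \ldots \theta_{m+j}}^{-1}(y_m)$. We show by induction on $n \geq 0$ that $d(y_{m+n}, x_0) \geq \tilde L^{n} \cdot d(y_m, x_0)$. For $n = 0$ this is an equality. Assume it holds for $n$, and put $r := \tilde L^{n} \cdot d(y_m, x_0)$. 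Since $\tilde L > 1$ and $d(y_m, x_0) > r_0$, we have $r \geq r_0$, and by the inductive hypothesis $y_{m+n} \notin D(x_0, r)$. Applying Lemma~\ref{encloseball} with this $r$ and $i = \theta_{m+n+1}$ gives
\[
y_{m+n+1} = w_{\theta_{m+n+1}}^{-1}(y_{m+n}) \in X \setminus D(x_0, \tilde L r),
\]
that is, $d(y_{m+n+1}, x_0) \geq \tilde L r = \tilde L^{n+1} \cdot d(y_m, x_0)$. This completes the induction, and taking $n$ arbitrary yields the assertion.
\end{proof}
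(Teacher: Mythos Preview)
Your argument is correct and is precisely what the paper intends: Proposition~\ref{expans} is stated there without proof, immediately after Lemma~\ref{encloseball}, as its evident iterated form, and your induction simply spells this out. One cosmetic point: the inductive hypothesis yields only $d(y_{m+n},x_0)\geq r$, not $>r$, so to place $y_{m+n}$ strictly outside the \emph{closed} disk $D(x_0,r)$ you should apply the lemma with a radius just below $r$ (still $\geq r_0$, since $r\geq d(y_m,x_0)>r_0$) and let it tend to $r$; this changes nothing of substance.
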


Finally we establish \eqref{eq:infinitetime}. Define $\delta(a):=
d(a,w_{\theta_{1}}^{-1}(a))$ for $a\in A$ and fixed $\theta_{1}$. Put
$\Delta(a):= r-d(a,x_{0})$. This controls exits from the disk; namely $y\in
D(x_{0},r)$ as long as $d(y,a)\leq\Delta(a)$. Now we track the distance of the
reverse trajectory $(y_{n})$ from its starting point $y_{0}:=a\in A$:
\[
d(y_{n},y_{0}) \leq\sum_{k=0}^{n-1} L^{k}\cdot d(y_{1},y_{0}) \leq n\cdot
L^{n}\delta(a).
\]
To keep the first $n$ points $y_{1},\ldots,y_{n}$ in $D(x_{0},r)$ it is
therefore enough to take $a\in A$ such that
\[
\delta(a) < \frac{\Delta(a)}{n\cdot L^{n}}.
\]
Indeed, $\Delta(a)$ does not vary too much $\|\Delta(a)\| \geq const >0$ for
$a$ close to $conv\left(  \bigcup_{i=1}^{N} Fix(w_{i})\right)  $, and $A\ni
a\mapsto\delta(a)$ is a continuous function on the connected set, and
$\delta^{-1}(0) = Fix(w_{\theta_{1}})$, so one can find $a\in A$,
$y_{1}=w_{\theta_{1}}^{-1}(y_{0})\neq y_{0}=a$, with sufficiently small
$\delta(a)$.

\section{\label{slowsec}Slow basin}

In the present section we review some basic notation from the theory of
hyperspaces as, unlike in the discussion so far, we need to deal with this
formalism in a direct way.

Let $x\in X$, $A\subset X$, $r>0$. We shall write
\[
d(x,A) := \inf_{a\in A} d(x,a)
\]
for the distance from $x$ to $A$,
\[
N_{r}A := \{x\in X: d(x,A)<r\}
\]
for the $r$-neighbourhood of $A$, and
\[
D_{r}A := \{x\in X: d(x,A)\leq r\}.
\]
for the $r$-dilation of $A$.

We say that a sequence $(x_{n})_{n=1}^{\infty}$ \textit{converges to the set}
$A\subset X$, denoted $x_{n}\to A$, whenever $d(x_{n},A) {\to}0$.

\begin{defin}
A \textit{slow basin} of the IFS $\mathcal{W}$ with the attractor $A$ is the
following set
\[
\tilde{B} = \{x\in X: \exists_{\vartheta\in\{1,\ldots,N\}^{\infty}}\;
w_{\vartheta|n}(x)\to A\}.
\]

\end{defin}

\begin{prop}
\label{th:slowbascontainsall} The slow basin $\tilde{B}$ contains both the
fast basin $\hat{B}$ and the basin $B(A)$ of the attractor $A$.
\end{prop}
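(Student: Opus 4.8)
The plan is to show the two inclusions $\hat B\subset\tilde B$ and $B(A)\subset\tilde B$ separately, each by exhibiting an appropriate infinite driving word $\vartheta$ for a given point $x$.

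First I would handle the fast basin. Suppose $x\in\hat B$. By Proposition~\ref{th:representation}(ii) there is some finite word $(\theta_1,\ldots,\theta_k)$ with $y:=w_{\theta_1..\theta_k}(x)\in A$. Now I want to extend $(\theta_1,\ldots,\theta_k)$ to an infinite word whose iterates carry $x$ into (a neighbourhood of) $A$. The natural choice is to append any word that keeps $y$ inside $A$: since $W(A)=A$, for the point $y\in A$ there is, for every $m$, a choice of $\theta_{k+1},\ldots,\theta_{k+m}$ with $w_{\theta_{k+1}..\theta_{k+m}}(y)\in A$ (each step, $A=W(A)=\bigcup_i w_i(A)$ forces $A\subset\bigcup_i w_i(A)$... more directly, $w_i(A)\subset A$ for every $i$, so simply appending all $1$'s keeps us in $A$). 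Concretely, take $\vartheta=(\theta_1,\ldots,\theta_k,1,1,1,\ldots)$. Then for every $n\geq k$, $w_{\vartheta|n}(x)=w_1^{\,n-k}(y)\in A$, since $w_1(A)\subset A$. Hence $d(w_{\vartheta|n}(x),A)=0$ for all large $n$, so $w_{\vartheta|n}(x)\to A$ and $x\in\tilde B$.

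Next I would handle the basin. Let $x\in B(A)$. Since $B(A)$ is open and $x\in B(A)$, the singleton $\{x\}$ is a nonempty compact subset of $B(A)$, so by the defining property of the attractor $W^n(\{x\})\to A$ in the Hausdorff metric. The one subtlety is translating this statement about the \emph{set} $W^n(\{x\})=\{w_{\theta_1..\theta_n}(x):(\theta_1,\ldots,\theta_n)\in\{1,\ldots,N\}^n\}$ into the existence of a \emph{single} infinite word. I would argue as follows: Hausdorff convergence $W^n(\{x\})\to A$ implies in particular $\sup\{d(z,A):z\in W^n(\{x\})\}\to 0$, i.e.\ $\max_{(\theta_1,\ldots,\theta_n)}d(w_{\theta_1..\theta_n}(x),A)\to 0$. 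So \emph{every} choice of word works: fix any $\vartheta\in\{1,\ldots,N\}^\infty$; then $d(w_{\vartheta|n}(x),A)\leq\sup\{d(z,A):z\in W^n(\{x\})\}\to 0$, hence $w_{\vartheta|n}(x)\to A$ and $x\in\tilde B$.

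The only place that needs care is the basin part, and there the subtlety is entirely on the side of bookkeeping: one must notice that $w_{\vartheta|n}(x)$ is one of the points comprising the compact set $W^n(\{x\})$, so control of the whole set (which is what the attractor hypothesis gives) automatically controls the trajectory along any word. No contractivity or metric hypotheses beyond the definition of strict attractor are needed. I would then simply conclude $\hat B\cup B(A)\subset\tilde B$.
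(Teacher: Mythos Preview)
Your proposal is correct and, for the inclusion $B(A)\subset\tilde B$, it is exactly the paper's argument: bound $d(w_{\vartheta|n}(x),A)$ by $\sup\{d(y,A):y\in W^{n}(\{x\})\}\leq d_{H}(W^{n}(\{x\}),A)\to 0$, which works for \emph{any} infinite word $\vartheta$. The paper simply writes ``it is enough to check that $B(A)\subset\tilde B$'' and omits the fast-basin inclusion; your explicit verification of $\hat B\subset\tilde B$ by appending a constant tail and invoking $w_{1}(A)\subset A$ supplies that omitted step in the spirit of the paper's conventions (compare the proof of Lemma~\ref{th:invslowbas}, where the prefix $\sigma$ is prepended to the driving word in the same way).
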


\begin{proof}
It is enough to check that $B(A)\subset\tilde{B}$. Take $x\in B(A)$ and any
$\vartheta\in\{1,\ldots,N\}^{\infty}$. Then
\[
d(w_{\vartheta|n}(x),A)\leq\sup\{d(y,A) : y\in W^{n}(x)\} \leq d_{H}%
(W^{n}(x),A){\to}0
\]
where $d_{H}$ denotes the Hausdorff distance (\cite{AliprantisBorder,Beer}).
\end{proof}

\begin{lem}
\label{th:invslowbas} The slow basin is backward (i.e., negatively) invariant:
if $x\in\tilde{B}$, then $W^{-1}(x)\subset\tilde{B}$.
\end{lem}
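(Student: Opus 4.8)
First I would decode what $y\in W^{-1}(x)$ means and then build a witnessing word for $y$ out of one for $x$ by prepending a single symbol. So fix $y\in W^{-1}(x)$. By the definition of the inverse image under the Hutchinson operator, $x\in W(y)=\bigcup_{i=1}^{N}w_{i}(y)$, hence $w_{j}(y)=x$ for some $j\in\{1,\ldots,N\}$. Since $x\in\tilde{B}$, there is an infinite word $\vartheta=(\theta_{1},\theta_{2},\ldots)$ with $w_{\vartheta|n}(x)\to A$.

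Now I would set $\vartheta':=(j,\theta_{1},\theta_{2},\ldots)$ and follow the trajectory of $y$ generated by $\vartheta'$: its first step carries $y$ to $w_{j}(y)=x$, and thereafter it coincides step for step with the trajectory of $x$ generated by $\vartheta$. Formally, one verifies by induction on $n$ that $w_{\vartheta'|(n+1)}(y)=w_{\vartheta|n}(x)$ for every $n\geq0$, using only $w_{j}(y)=x$ together with the recursive description of the maps $w_{\vartheta'|k}$. Since $w_{\vartheta|n}(x)\to A$, it follows that $w_{\vartheta'|m}(y)\to A$ as $m\to\infty$, that is, $y\in\tilde{B}$. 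As $y\in W^{-1}(x)$ was arbitrary, $W^{-1}(x)\subset\tilde{B}$.

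I expect essentially no obstacle here: the proof is bookkeeping, and the one detail that matters is that the missing IFS-step has to be inserted at the \emph{front} of the word (appending to an infinite word is meaningless), after which the two trajectories agree up to a shift by one index. Conceptually, $y\in W^{-1}(x)$ simply says that $x$ lies one IFS-step ahead of $y$ along some branch, so any branch along which the trajectory of $x$ approaches $A$ prolongs backwards to such a branch for $y$.
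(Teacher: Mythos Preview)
Your proof is correct and follows essentially the same route as the paper's: you pick $j$ with $w_j(y)=x$, prepend $j$ to the witnessing word $\vartheta$ for $x$, and observe that the resulting trajectory for $y$ agrees (after a one-step shift) with the trajectory for $x$, hence converges to $A$. The paper does exactly this, writing $w_{\sigma\theta_1..\theta_n}(y)=w_{\theta_1..\theta_n}(x)\to A$ with $\sigma$ in place of your $j$; your version is simply more explicit about the index shift and the induction, and you correctly conclude $y\in\tilde{B}$ (the paper's final line has a harmless typo, writing $x$ where $y$ is meant).
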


\begin{proof}
Let $x\in\tilde{B}$. Then $w_{\vartheta|n}(x)\to A$ for some $\vartheta
\in\{1,\ldots,N\}^{\infty}$. Every $y\in W^{-1}(x)$ provides representation
$x=w_{\sigma}(y)$ with some $\sigma\in\{1,\ldots,N\}$. Hence
\[
w_{\sigma\theta_{1}..\theta_{n}}(y) = w_{\theta_{1}..\theta_{n}}(x)\to A,
\]
so $x\in\tilde{B}$.
\end{proof}

\begin{thm}
We have the following representations of the slow basin

\begin{enumerate}
\item[(i)] $\tilde{B} = \bigcup_{k\geq0} W^{-k}(B(A))$,

\item[(ii)] there exists $r_{0}>0$ such that for $0<r<r_{0}$
\[
\tilde{B} = \bigcup_{k\geq0} W^{-k}(N_{r}A) = \bigcup_{k\geq0} W^{-k}%
(D_{r}A).
\]

\end{enumerate}

In particular, the slow basin is an open set.
\end{thm}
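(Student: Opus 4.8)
The plan is to prove the two representations and then deduce openness. For part (i), I would show both inclusions. The inclusion $\bigcup_{k\geq 0} W^{-k}(B(A))\subset\tilde B$ follows from Proposition~\ref{th:slowbascontainsall} (which gives $B(A)\subset\tilde B$) together with the backward invariance of $\tilde B$ established in Lemma~\ref{th:invslowbas}: iterating that lemma gives $W^{-k}(\tilde B)\subset\tilde B$, hence $W^{-k}(B(A))\subset\tilde B$ for every $k\geq 0$. For the reverse inclusion, take $x\in\tilde B$, so $w_{\vartheta|n}(x)\to A$ for some infinite word $\vartheta$. Since $A$ is a strict attractor with basin $B(A)$, the set $B(A)$ is open and contains $A$; because $d(w_{\vartheta|n}(x),A)\to 0$, for $n$ large enough the point $y:=w_{\vartheta|n}(x)$ lies in $B(A)$ (one should note $A$ is compact and $B(A)$ open, so $A$ has a uniform neighbourhood inside $B(A)$). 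Then $x\in w_{\theta_1}^{-1}\circ\cdots\circ w_{\theta_n}^{-1}(\{y\})\subset w_{\theta_1..\theta_n}^{-1}(B(A))\subset W^{-n}(B(A))$, using Lemma (ii) of the earlier lemma on inverse images of $W$. This gives $x\in\bigcup_{k\geq 0}W^{-k}(B(A))$.

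For part (ii), the key point is that $A$ compact and $B(A)$ open together yield an $r_0>0$ with $D_{r_0}A\subset B(A)$ (uniform tube around a compact set inside an open set). Then for $0<r<r_0$ we have the chain $N_rA\subset D_rA\subset D_{r_0}A\subset B(A)$, and also trivially $A\subset N_rA$. Applying $W^{-k}$ and taking unions over $k$, monotonicity of each $w_{\theta_1..\theta_k}^{-1}$ gives
\[
\bigcup_{k\geq 0}W^{-k}(A)\;\subset\;\bigcup_{k\geq 0}W^{-k}(N_rA)\;\subset\;\bigcup_{k\geq 0}W^{-k}(D_rA)\;\subset\;\bigcup_{k\geq 0}W^{-k}(B(A))=\tilde B,
\]
where the last equality is part (i). To close the loop I need $\tilde B\subset\bigcup_{k\geq 0}W^{-k}(N_rA)$: given $x\in\tilde B$ with $w_{\vartheta|n}(x)\to A$, for $n$ large $d(w_{\vartheta|n}(x),A)<r$, so $w_{\vartheta|n}(x)\in N_rA$ and hence $x\in W^{-n}(N_rA)$. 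Both middle unions are therefore squeezed equal to $\tilde B$.

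Finally, openness: each $w_i$ is a homeomorphism of $X$, so each $w_{\theta_1..\theta_k}^{-1}$ is continuous (indeed a homeomorphism), whence $w_{\theta_1..\theta_k}^{-1}(N_rA)$ is open as the preimage of the open set $N_rA$ under the continuous map $w_{\theta_k}\circ\cdots\circ w_{\theta_1}$ — equivalently the homeomorphic image of an open set. Thus $W^{-k}(N_rA)=\bigcup_{(\theta_1,\ldots,\theta_k)}w_{\theta_1..\theta_k}^{-1}(N_rA)$ is a finite union of open sets, hence open, and $\tilde B=\bigcup_{k\geq 0}W^{-k}(N_rA)$ is a countable union of open sets, hence open. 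The main obstacle, such as it is, is the extraction of the uniform radius $r_0$ with $D_{r_0}A\subset B(A)$: this uses compactness of $A$ and openness of $B(A)$ in an essential way, and it is exactly the step that makes the "$N_rA$" and "$D_rA$" versions in (ii) work for all sufficiently small $r$ rather than just for one lucky value; everything else is bookkeeping with the inverse-image lemma, Lemma~\ref{th:invslowbas}, and Proposition~\ref{th:slowbascontainsall}.
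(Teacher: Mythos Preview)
Your proof is correct and follows essentially the same approach as the paper: one chain of inclusions using Proposition~\ref{th:slowbascontainsall} and Lemma~\ref{th:invslowbas} for the direction $\bigcup_{k\geq 0}W^{-k}(B(A))\subset\tilde B$, and the definition of convergence to $A$ to land some iterate in $N_rA$ for the reverse. The only cosmetic difference is that you prove (i) separately first and then invoke it for (ii), whereas the paper handles both in a single sandwich $\tilde B\supset\bigcup W^{-k}(B(A))\supset\bigcup W^{-k}(D_rA)\supset\bigcup W^{-k}(N_rA)\supset\tilde B$; your added explanation of openness is also more explicit than the paper's.
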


\begin{proof}
Since $B(A)\supset A$ is an open neighbourhood of a compact set, there exists
$r_{0}$ such that
\[
A\subset N_{r}A \subset D_{r}A\subset N_{r_{0}}A\subset B(A)
\]
for $r<r_{0}$. Obviously
\[
\tilde{B} \supset\bigcup_{k\geq0} W^{-k}(B(A)) \supset\bigcup_{k\geq0}
W^{-k}(D_{r}A) \supset\bigcup_{k\geq0} W^{-k}(N_{r}A),
\]
$0<r<r_{0}$, due to Lemma \ref{th:invslowbas} and Proposition
\ref{th:slowbascontainsall}.

Now let $x\in\tilde{B}$. Then $w_{\theta_{1}..\theta_{n}}(x)\to A$ for some
$(\theta_{1},\ldots,\theta_{n},\ldots)\in\{1,\ldots,N\}^{\infty}$. From the
definition of convergence there exists $k$ such that $w_{\theta_{1}%
..\theta_{n}}(x)\in N_{r}A$, i.e., $x\in W^{-k}(N_{r}A)$. Therefore $\tilde
{B}\subset\bigcup_{k\geq0} W^{-k}(N_{r}A)$.
\end{proof}

\begin{thm}
If $X$ is a space with the property that its open balls are path connected
sets, and the attractor $A$ of $\mathcal{W}= (X;w_{1},\ldots,w_{N})$ is
connected, then the slow basin $\tilde{B}$ is a path connected set.
\end{thm}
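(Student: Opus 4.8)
The plan is to realise $\tilde{B}$ as a union of path connected sets sharing a common point, and then invoke the sum theorem for connectedness. I would fix $r$ small enough (so that the representation $\tilde{B}=\bigcup_{k\geq 0}W^{-k}(N_{r}A)$ proved above is available), and expand each iterate preimage by the Lemma on the inverse images of iterates of $W$, obtaining
\[
\tilde{B}=\bigcup_{k\geq 0}\;\bigcup_{(\theta_{1},\ldots,\theta_{k})\in\{1,\ldots,N\}^{k}} w_{\theta_{1}..\theta_{k}}^{-1}(N_{r}A),
\]
a union, over all finite words, of homeomorphic copies of $N_{r}A$. By the invariance theorem for connectedness, each such copy is path connected as soon as $N_{r}A$ is; and all of these copies contain $A$, since the invariance $W(A)=A$ gives $w_{i}(A)\subseteq A$ for every $i$, whence $w_{\theta_{1}..\theta_{k}}(A)\subseteq A\subseteq N_{r}A$, i.e. $A\subseteq w_{\theta_{1}..\theta_{k}}^{-1}(N_{r}A)$, for every finite word (the empty word included). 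So, once $N_{r}A$ is known to be path connected, the sum theorem for connectedness applied to this family finishes the proof.

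The remaining --- and main --- point is to show that the neighbourhood $N_{r}A$ is path connected. I would first argue it is connected: we have $A\subseteq N_{r}A$, and if $N_{r}A=U\cup V$ split into disjoint nonempty open sets, then the connected set $A$ would lie wholly in one piece, say $A\subseteq U$; picking $x\in V$ and $a\in A$ with $d(x,a)<r$, the open ball $N_{r}\{a\}$ is path connected by hypothesis (hence connected), is contained in $N_{r}A$, and meets both $U$ (at $a$) and $V$ (at $x$), which is impossible. Then I would note that $N_{r}A$ is locally path connected, since it is open and every one of its points has arbitrarily small open balls of $X$ contained in it, and these are path connected by hypothesis; and a connected locally path connected space is path connected (its path components are open, hence also closed, so there is only one). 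Thus $N_{r}A$ is path connected.

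The hard part is exactly this one spot where the hypothesis on $X$ is used, namely upgrading connectedness of $N_{r}A$ to path connectedness --- everything else is bookkeeping with the representation $\tilde{B}=\bigcup_{k\geq 0}W^{-k}(N_{r}A)$ and with the sum and invariance theorems for connectedness already at hand. One could alternatively try to run the same scheme with $B(A)$ in place of $N_{r}A$, using $\tilde{B}=\bigcup_{k\geq 0}W^{-k}(B(A))$, but that would require knowing that $B(A)$ itself is connected, which is not evident, so the $N_{r}A$ route (or equally the $D_{r}A$ route) is preferable.
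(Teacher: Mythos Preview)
Your proposal is correct and follows the same overall scheme as the paper: establish that $N_{r}A$ is path connected, observe that every $w_{\theta_{1}..\theta_{k}}^{-1}(N_{r}A)$ is a homeomorphic copy of it containing $A$, and apply the sum theorem. The one genuine difference is in how you handle the step you correctly flagged as the crux: the paper shows $N_{r}A$ is path connected by invoking chainability of the connected compact set $A$ (so that $N_{r}A=\bigcup_{a\in A}N_{r}\{a\}$ is a union of path connected balls in which consecutive balls along any $\varepsilon$-chain overlap), whereas you argue that $N_{r}A$ is connected and locally path connected, hence path connected. Both arguments are sound; yours is perhaps slightly more self-contained, while the paper's is a touch more direct once one recalls the chainability fact.
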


\begin{proof}
Since $A$ is connected it is chainable: for each $\varepsilon>0$ and
$a_{0},a\in A$ there exists $\{a_{1},\ldots,a_{m}\}\subset A$ with
$d(a_{i-1},a_{i})<\varepsilon$, $i=1,\ldots,m$, $a_{m}=a$ (Exercise 3.2.8 (b)
p.90 \cite{Beer}). Thus $N_{r} A = \bigcup_{a\in A} N_{r} \{a\}$ is path
connected as a connected union of path connected balls.

The sets $w_{i}^{-1}(N_{r}A)$ are homeomorphic images of path connected
$N_{r}A$ and form connected union $W^{-1}(N_{r}A) = \bigcup_{i=1}^{N}
w_{i}^{-1}(N_{r}A)$, because $w_{i}^{-1}(N_{r}A) \supset w_{i}^{-1}(A)\supset
A$.

Inductively all $W^{-k}(N_{r}A)$ are path connected, $k\geq0$, and
$W^{-k}(N_{r}A)\supset A$. This gives path connectedness of $\tilde{B}%
=\bigcup_{k\geq0}W^{-k}(N_{r}A)$.
\end{proof}


\begin{thebibliography}{9999999999999999999999999}                                                                        %


\bibitem[Aliprantis \& Border]{AliprantisBorder}Ch.D. Aliprantis, K.C. Border,
\textit{Infinite Dimensional Analysis: A Hitchhiker's Guide}, Springer 2007.

\bibitem[Andres \& Fiser]{AndresFiser}J. Andres, J. Fi\v{s}er, \textit{Metric
and topological multivalued fractals}, Int. J. Bifurc. Chaos \textbf{14} no.4
(2004), 1277--1289.

\bibitem[Arbieto et al]{Arbieto}A. Arbieto, A. Junqueira, B. Santiago,
\textit{On weakly hyperbolic iterated functions systems}, arXiv:1211.1738, 2012.

\bibitem[Banakh \& Novosad]{Banakh}T. Banakh, N. Novosad, \textit{Micro and
macro fractals generated by multi-valued dynamical dystems},
arXiv:1304:7529v1, 2013.

\bibitem[Banakh \& Zarichnyi]{BanakhZarichnyi}T. Banakh, I. Zarichnyi,
\textit{Characterizing the Cantor bi-cube in asymptotic categories}, Groups,
Geometry, and Dynamics, \textbf{5:4} (2011) 691-728

\bibitem[Barnsley]{FractalsEver}M. F. Barnsley, \textit{Fractals Everywhere},
Academic Press, 1988; 2nd Edition, Morgan Kaufmann 1993; 3rd Edition, Dover
Publications, 2012.

\bibitem[Barnsley \& Demko]{BarnsleyDemko}M.F. Barnsley, S. Demko,
\textit{Iterated function systems and the global construction of fractals},
Proc. R. Soc. London Ser. A \textbf{399} (1985), 243--275.

\bibitem[Barnsley \& Lesniak]{BarnsleyLesniak-Continuity}M.F. Barnsley, K.
Le\'{s}niak, \textit{On the continuity of the Hutchinson operator},
arXiv:1202.2485 (preprint)

\bibitem[Barnsley et al II]{FManifolds}M.F. Barnsley, K. Le\'{s}niak, A.
Vince, \textit{Symbolic iterated function systems, fast basins and fractal
manifolds,} arXiv:1308.3819v1 [math.DS] (2013)

\bibitem[Barnsley \& Vince]{BarnsleyVince-Projective}M.F. Barnsley, A. Vince,
\textit{Real projective iterated function systems}, J. Geom. Anal. \textbf{22}
(2012), no. 4, 1137--1172.

\bibitem[Barnsley \& Vince 2]{BarnsleyVince}M.F. Barnsley, A. Vince,
\textit{Fractal continuation of analytic (fractal) functions}, arXiv:1209.6100
(preprint), Constructive Approximation, 2013, in press.

\bibitem[Barnsley \& Vince 3]{tiling}M.F. Barnsley, A. Vince, \textit{Fractal
tiling}, in preparation (2013).

\bibitem[Barnsley et al]{RPGT}M.F. Barnsley, D.C. Wilson, K. Le\'{s}niak,
\textit{Some recent progress concerning topology of fractals}, in: K.P. Hart,
P. Simon, J. van Mill (Eds.), \textit{Recent Progress in General Topology
III}, Springer 2013.

\bibitem[Beer]{Beer}G.A. Beer, \textit{Topologies on closed and closed convex
sets}, Kluwer 1993.

\bibitem[Dranishnikov \& Zarichnyi]{Dranishnikov}A. Dranishnikov, M.
Zarichnyi, \textit{Universal spaces for asymptotic dimension}, Topology
Applications \textbf{140}:2-3 (2004), 203--225.

\bibitem[Engelking]{Engelking}R. Engelking, \textit{General Topology},
Helderman, Berlin 1989.

\bibitem[Edgar]{Edgar}G. Edgar, \textit{Measure, Topology, and Fractal
Geometry}, Second Edition, Springer 2008.

\bibitem[Falconer]{Falconer}K. Falconer, \textit{Fractal Geometry:
Mathematical Foundations and Applications}, Second Edition, Wiley 2003.

\bibitem[Gottschalk]{Gottschalk}W. H. Gottschalk, \textit{Minimal sets; an
introduction to topological dynamics}, Bull. Amer. Math. Soc.\textbf{64},
no.6, (1958), 336--351.

\bibitem[Hata]{Hata}M. Hata, \textit{On the structure of self-similar sets},
Japan J. Appl. Math. 2 (1985), 381--414.

\bibitem[Hutchinson]{Hutchinson}J. Hutchinson, \textit{Fractals and
self-similarity}, Indiana Univ. Math. J. \textbf{30} (1981) 713--747.

\bibitem[iPad]{iPad}\textit{Fractile} (a Julia set and Mandelbrot set
calculator for the iPad and iPhone) www.fractile.net

\bibitem[Kameyama]{Kameyama}A. Kameyama, \textit{Distances on topological
self-similar sets and the kneading determinants}, J. Math. Kyoto Univ. 40-4
(2000), 603--674.

\bibitem[Kieninger]{K}B. Kieninger, \textit{Iterated Function Systems on
Compact Hausdorff\ Spaces}, Ph.D. Thesis, Augsburg University, Berichte aus
der Mathematik, Shaker-Verlag, Aachen 2002.

\bibitem[Kigami]{Kigami}J. Kigami, \textit{Analysis on fractals}, Cambridge
University Press 2001.

\bibitem[Kunze et al]{vrscay}H. E. Kunze, D. La Torre, E. R. Vrscay,
\textit{Contractive multifunctions, fixed point inclusions and iterated
multifunction systems}, J. Math. Anal. Appl. \textbf{330} (2007) 159--173.

\bibitem[Lasota \& Myjak]{lasota}A. Lasota, J. Myjak, \textit{Attractors of
multifunctions}, Bulletin of the Polish Academy of Sciences, \textbf{48} no. 3
(2000) 319--334.

\bibitem[Lucchetti]{Lucchetti}R. Lucchetti, \textit{Convexity and Well-Posed
Problems}, Springer 2006.

\bibitem[Mate]{Mate}L. M\'{a}t\'{e}, \textit{The Hutchinson-Barnsley theory
for certain noncontraction mappings}, Period. Math. Hungar. 27 no.1 (1993), 21--33.

\bibitem[Milnor]{Milnor}J. Milnor, \textit{Dynamics in One Complex Variable:
Introductory Lectures}, F. Vieweg\&Son, 1999.

\bibitem[Strichartz]{Strichartz}R.S. Strichartz, \textit{Fractafolds based on
the Sierpi\'{n}ski gasket and their spectra}, Trans. Amer. Math. Soc.
\textbf{355} (2003), no.10, 4019--4043.

\bibitem[Vince]{Vince}A. Vince, \textit{M\"{o}bius iterated functions
systems}, Trans. Amer. Math. Soc. \textbf{365} (2013), 491--509.

\bibitem[Wicks]{Wicks}K.R. Wicks, \textit{Fractals and hyperspaces}, Springer 1993.

\bibitem[Zajicek]{Zajicek}L. Zaj\'{\i}\v{c}ek, \textit{On $\sigma$-porous sets
in abstract spaces}, Abstr. Appl. Anal. (2005), no. 5, 509--534.
\end{thebibliography}
\end{document}